\documentclass[11pt]{amsart}
\usepackage{amssymb, amsmath}
\usepackage{amscd}
\usepackage{amsthm}
\usepackage{graphicx}
\newtheorem{theorem}{Theorem}
\newtheorem{lemma}[theorem]{Lemma}

\theoremstyle{definition}

\newtheorem*{acknowledgement}{Acknowledgement}
\title{Smooth (non)rigidity of piecewise rank one locally symmetric manifolds}
\author{T. T$\hat{\mathrm{a}}$m Nguy$\tilde{\hat{\mathrm{e}}}$n Phan}
\DeclareMathOperator{\SL}{SL}

\DeclareMathOperator{\Sol}{Sol}

\DeclareMathOperator{\Ker}{Ker}

\DeclareMathOperator{\cd}{cd}

\DeclareMathOperator{\Isom}{Isom}
\DeclareMathOperator{\Out}{Out}

\DeclareMathOperator{\PSL}{PSL}
\DeclareMathOperator{\Aut}{Aut}
\DeclareMathOperator{\Fix}{Fix}

\DeclareMathOperator{\Diff}{Diff}
\DeclareMathOperator{\Conj}{Conj}

\def\R{\mathbb{R}}
\def\Z{\mathbb{Z}}
\def\N{\mathbb{N}}
\def\Q{\mathbb{Q}}

\def\H{\mathbb{H}}

\oddsidemargin=0in
\evensidemargin=0in
\textwidth=41em

\input xy
\xyoption{all}
\begin{document}
\begin{abstract}
We define \emph{piecewise rank $1$} manifolds, which are aspherical manifolds that generally do not admit a nonpositively curved metric but can be decomposed into pieces that are diffeomorphic to finite volume, irreducible, locally symmetric, nonpositively curved manifolds with $\pi_1$-injective cusps. We prove smooth (self) rigidity for this class of manifolds in the case where the gluing preserves the cusps' homogeneous structure.  We compute the group of self homotopy equivalences of such a manifold and show that it can contain a normal free abelian subgroup and thus, can be infinite. Elements of this abelian subgroup are twists along elements in the center of the fundamental group of a cusp. 
\end{abstract}
\maketitle
\section{Introduction and the main theorems}
Let $M$ be a smooth manifold. We denote by $\Diff(M)$ the group of self-diffeomorphisms of $M$ and by $\Diff_0(M)$ the group of self-diffeomorphisms of $M$ that are homotopic (not necessarily isotopic) to the identity map. Hence $\Diff(M)/\Diff_0(M)$ is the group of self diffeomorphisms of $M$ up to homotopy. Let $\Out(\pi_1(M))$ be the group of outer automorphisms of $\pi_1(M)$. The action of a diffeomorphism on $\pi_1(M)$ induces a natural homomorphism \[\eta \colon \Diff(M)/\Diff_0(M) \longrightarrow \Out(\pi_1(M)).\]
If $M$ is aspherical, then $\Out(\pi_1(M))$ is canonically isomorphic to the group of self homotopy equivalences of $M$ up to homotopy, and $\eta$ is an injection. One can ask if $\eta$ is a surjection, i.e. if a homotopy equivalence of $M$ is homotopic to a diffeomorphism.

It is known for a number of classes of manifolds that $\eta$ is an isomorphism. These include closed surfaces, by the Dehn-Nielsen-Baer Theorem; infra-nil manifolds, by Auslander \cite{Auslander}; and finite  volume, complete, irreducible, locally symmetric, nonpositively curved manifold of dimension greater than $2$, by Mostow-Prasad-Margulis Rigidity. If we relax the smoothness condition and replace diffeomorphisms by homeomorphisms, this also holds for closed, nonpositively curved manifolds of dimension greater than $4$, by Farrell and Jones \cite{FJ2}, and for solvmanifolds, by work of Mostow \cite{Mostow}.

Observe that the above classes of manifolds are locally homogeneous or nonpositively curved. A natural question one can ask is whether the above rigidity phenomena holds for other classes of aspherical manifolds that are not locally homogeneous and do not admit a nonpositively curved metric. 

In this paper, we study rigidity properties of manifolds obtained by gluing, possibly infinitely many, manifolds with boundary that are compactifications of noncompact, complete, finite volume, irreducible, locally symmetric, nonpositively curved manifolds along their boundaries. We call these manifolds \emph{piecewise rank $1$} manifolds.  The precise definition of these manifolds will be given in Section \ref{piecewise Q-rank 1}. As we will see, these manifolds generally do not admit any locally homogeneous or nonpositively curved metric. 

From now on, by a \emph{locally symmetric} manifold we will mean a complete, finite volume, irreducible, locally symmetric, nonpositively curved manifold.

If $M$ be a locally symmetric manifold, then $M$ is the interior of a compact manifold $\overline{M}$ with boundary, which can be taken to be the manifold obtained by deleting the ends of the cusps of $M$. By identifying a collection of such manifolds along their boundaries, e.g. simply doubling $M$, we get a new manifold. For example, we can take $M$ to be a Hilbert modular surface. In this case, $M$ is the quotient of the product of two hyperbolic planes $\H^2\times\H^2$ by a torsion free subgroup of $\SL(2,\Z[\sqrt{d}])$, for some square free integer $d$ (see Section \ref{examples} for a more detailed description of these manifolds). The boundary of $M$ is a compact manifold with $\Sol$ geometry.

In the spirit of the Borel conjecture, and because there are non-aspherical manifolds that are not topologically rigid, e.g. some lens spaces, we are interested in only aspherical manifolds of the above type. The aspherical condition for these manifolds is that each boundary component is aspherical and is $\pi_1$-injective, which corresponds exactly to the noncompact, locally symmetric manifolds with $\R$-rank $1$ or with higher $\R$-rank but $\Q$-rank $1$.  We call these manifolds \emph{rank $1$} manifolds. The above example of Hilbert modular surfaces are rank $1$ manifolds. The compactification of these manifolds into compact manifolds with boundary can be obtained by deleting the ends of their \emph{cusps}.

We can obtain aspherical manifolds from gluing locally symmetric manifolds of higher $\Q$-rank, but in this case the construction is, however, more sophisticated, for we cannot naively glue those together along each pair of boundary components to get an aspherical manifold. This is because the boundary is either not aspherical or is not $\pi_1$-injective. The case of gluing higher $\Q$-rank manifolds are rather different and can be found in \cite{Tampwlocsym}. 

The first main result of this paper is that if the gluing is by diffeomorphisms that respect the structure of the parabolic subgroups corresponding to the cusps, or \emph{p-affine}, which we will define precisely later, then smooth (self) rigidity holds for these manifolds. 

\begin{theorem}[Smooth rigidity]\label{Mostow}
Let $M$ be a piecewise rank $1$ manifold of dimension $\geq 3$. Suppose that the gluing along pairs of boundary components of the pieces in the cusp decomposition of $M$ is p-affine. Then any homotopy equivalence of $M$ is homotopic to a diffeomorphism. 
\end{theorem}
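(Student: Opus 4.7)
The plan is to reduce the rigidity problem to rigidity of the individual pieces, using the p-affine gluings to connect them. Let $f \colon M \to M$ be a homotopy equivalence, and let $\{N_i\}$ denote the pieces in the cusp decomposition of $M$. I would proceed in three stages: first, homotope $f$ to respect the cusp decomposition; second, apply Mostow--Prasad--Margulis rigidity on each piece to obtain an isometry there; third, modify in collar neighborhoods of the boundary components to glue the piecewise isometries into a global diffeomorphism.

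For the first stage, $\pi_1(M)$ carries a natural graph-of-groups decomposition whose vertex groups are the $\pi_1(N_i)$ (fundamental groups of finite volume, irreducible, rank $1$ locally symmetric manifolds) and whose edge groups are the fundamental groups of the shared boundary components (virtually nilpotent in the $\R$-rank $1$ case, virtually polycyclic in the higher $\R$-rank, $\Q$-rank $1$ case). The vertex groups are algebraically recognizable, for example as the maximal subgroups that are not virtually solvable, and the peripheral subgroups form a canonical collection up to conjugacy. Hence $f_*$ must permute the conjugacy classes of vertex groups and respect the peripheral structure; by standard covering-space and graph-of-spaces arguments, $f$ is homotopic to a map sending each piece to a piece and each boundary component to a boundary component.

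For the second stage, the restriction of this modified $f$ to each piece $N_i$ is a proper homotopy equivalence of rank $1$ locally symmetric manifolds of dimension $\geq 3$. By Mostow--Prasad--Margulis rigidity, it is properly homotopic to an isometry $\phi_i \colon N_i \to N_{\sigma(i)}$ of the interiors. Since cusp geometry is determined by the parabolic subgroup fixing the corresponding point at infinity, $\phi_i$ extends to a diffeomorphism of the compactified piece whose boundary restrictions are p-affine.

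For the third stage, consider a shared boundary component $B$ glued by a p-affine diffeomorphism. The boundary restrictions coming from the two adjacent pieces are each homotopic to $f|_B$, hence homotopic to each other. Since $B$ is an infra-nilmanifold or, in the higher $\R$-rank case, an infra-solvmanifold, Auslander's theorem (respectively the analogous theorem of Mostow) implies that homotopic diffeomorphisms of $B$ are isotopic. Using disjoint collar neighborhoods of the boundary components, I would deploy these isotopies to modify the piecewise isometries so that they agree exactly on each shared boundary, yielding a smooth diffeomorphism of $M$ homotopic to $f$. The principal obstacle is the coherence of stage three: one must orchestrate the collar isotopies at all boundary components simultaneously, and the p-affine hypothesis is essential precisely to keep the boundary mismatches in the identity component of the diffeomorphism group modulo twists along elements in the center of the cusp fundamental group, so that Auslander-type rigidity can absorb them.
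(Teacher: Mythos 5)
Your three-stage structure tracks the paper's argument closely: identify that $f_*$ preserves the graph-of-groups decomposition, apply Mostow--Prasad rigidity on each piece, and repair the boundary mismatch in collars. Stages~1 and~2 are essentially correct in outline (the paper's version of stage~1 is Lemma~\ref{cusp subgroups} and Lemma~\ref{vertex subgroup}, proved via the Bass--Serre tree, property (FA)/(T) for higher-rank vertex groups, and a quasi-isometry argument). But stage~3 contains a genuine error.

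You assert that the shared boundary component $B$ is an infra-nilmanifold in the $\R$-rank $1$ case, or an infra-solvmanifold in the higher $\R$-rank, $\Q$-rank $1$ case, so that Auslander (resp.\ Mostow) rigidity lets you upgrade a homotopy to an isotopy. The second half of this is false in general. The paper emphasizes exactly this point: when $G$ has $\R$-rank $\geq 2$ and $\Q$-rank $1$, the cusp cross section is finitely covered by a fiber bundle with nilmanifold fiber $N_P/\Gamma_{N_P}$ and base $B = X_P/\Lambda$ a compact nonpositively curved \emph{locally symmetric} manifold for the reductive Levi factor $M_P$; the base need not be flat, since $M_P$ can have a nontrivial semisimple part. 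Such a cross section is not virtually solvable, so it is not an infra-solvmanifold, and Mostow's solvmanifold rigidity does not apply. This is precisely the obstacle that makes the $\Q$-rank $1$ case harder than the $\R$-rank $1$ case treated in the prior paper.

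What the paper does instead is Theorem~\ref{cusp isotopy}, which is tailored to the parabolic structure rather than invoking a black-box rigidity theorem for the boundary. Because the gluing is p-affine and the piece-isometries extend to the Borel--Serre boundary (Property~(d)), the boundary mismatch lifts to an element $\widetilde{f}$ of the parabolic $P$ that is $\pi_1(S)$-equivariant and equivariantly homotopic to the identity. Equivariance forces $\widetilde{f}$ into the center of $M_PN_P$, and Property~(e) says that this center is contained in the unipotent radical $N_P$. One then runs the straight-line path in the logarithmic coordinates of the nilpotent group $N_P$; normality of $N_P$ in $P$ guarantees the path is $C$-equivariant, so it descends to an isotopy of $S$ that can be inserted in a collar. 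So your instinct that ``p-affine keeps the boundary mismatch small enough to absorb'' is right, but the mechanism is Lie-theoretic (Properties (e) and (f) plus the exponential map of $N_P$), not solvmanifold rigidity. As a minor remark, your worry about ``orchestrating collar isotopies at all boundary components simultaneously'' is not a real issue: the collars are disjoint, so each boundary is repaired independently.
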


We do not expect the above theorem to hold if ``p-affine" is removed from the hypothesis for the same reason that there is no smooth Borel conjecture. The ``exotic sphere trick"  in \cite{FJ3} gives smooth manifolds for which the homomorphism $\eta$ is not surjective in certain cases. However, we do not address the general question of for which gluing maps smooth rigidity holds for piecewise rank $1$ manifolds in this paper.

Given the above theorem, one can ask for more, namely if there is a solution to the Nielsen Realization problem for piecewise rank $1$ manifolds, that is, whether the group $\Out(\pi_1(M))$ cannot be realized as a subgroup of $\Diff(M)$. In general, this is not true. Morita \cite{Morita} proved that if $M$ is a surface of genus greater than $17$, the group $\Out(\pi_1(M))$ does not lift to $\Diff(M)$. However, for a piecewise rank $1$ manifold $M$ of dimension $\geq 3$, the group $\Out(\pi_1(M))$ can be realized by a group of diffeomorphisms of $M$.  

\begin{theorem}\label{Nielsen}
Let $M$ be a piecewise rank $1$ manifold of dimension $\geq 3$. Suppose that the gluing along pair of boundary components of the pieces in the decomposition of $M$ is p-affine. Then $\Out(\pi_1(M))$ can be realized a group of diffeomorphisms. That is, there is an injective homomorphism $\rho \colon \Out(\pi_1(M)) \longrightarrow \Diff(M)$ such that 
\[ \xymatrix{
& & \Diff(M) \ar[d]^p  \\
 \Out(\pi_1(M)) \ar[urr]^\rho &  & \Diff(M)/\Diff_0(M)\ar[ll]_\eta  } \]
\end{theorem}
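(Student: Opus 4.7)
\emph{Proof proposal.} By Theorem~\ref{Mostow}, the natural map $\eta$ is surjective, and since $M$ is aspherical it is also injective; hence $\eta$ is an isomorphism. It therefore suffices to construct a group homomorphism $\rho \colon \Out(\pi_1(M)) \to \Diff(M)$ realizing each $\phi$ by a canonical diffeomorphism of $M$. The strategy is to build $\rho(\phi)$ piece-by-piece using Mostow--Prasad--Margulis rigidity on each locally symmetric piece and to glue the results via the p-affine structure, incorporating canonical cusp twist diffeomorphisms to account for the twist component of $\phi$ along central elements of cusp fundamental groups.

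Let $\{M_i\}_{i \in I}$ be the rank $1$ locally symmetric pieces of $M$. The conjugacy classes of the subgroups $\pi_1(M_i)$ can be characterized intrinsically within $\pi_1(M)$ (e.g.\ as maximal lattice subgroups with the appropriate rank-$1$ or $\Q$-rank-$1$ cusp structure), so any $\phi \in \Out(\pi_1(M))$ induces a permutation $\sigma_\phi$ of $I$ together with outer isomorphisms $\phi_i \colon \pi_1(M_i) \to \pi_1(M_{\sigma_\phi(i)})$. By Mostow--Prasad--Margulis rigidity, each $\phi_i$ is realized by a \emph{unique} isometry $F_i \colon M_i \to M_{\sigma_\phi(i)}$, and on every cusp cross-section $F_i$ restricts to an affine diffeomorphism of the horospherical nilmanifold. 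The twist component of $\phi$ is realized by explicit collar diffeomorphisms $\tau_\phi$ coming from the $S^1$-action generated by the relevant central element of the cusp group. The main obstacle, and the precise step where the p-affine hypothesis is essential, is to verify that the $F_i$ together with the $\tau_\phi$ match across every pair of glued boundary components: this reduces to tracking how $\phi$ permutes and conjugates the peripheral subgroups in the graph-of-groups decomposition of $\pi_1(M)$, and then using that both Mostow-rigid isometries on cusp cross-sections and canonical cusp twists preserve precisely the parabolic/nilpotent structure preserved by p-affine gluings.

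Define $\rho(\phi)$ to be the diffeomorphism of $M$ assembled from these matched pieces. The uniqueness in Mostow rigidity, together with the canonical nature of the $\tau_\phi$, implies that $\rho(\phi) \circ \rho(\psi)$ and $\rho(\phi\psi)$ restrict to the same isometry on each piece and the same cusp twist on each collar, hence agree globally; thus $\rho$ is a homomorphism. For injectivity, if $\rho(\phi) = \id_M$ then each $F_i = \id$ and each component of $\tau_\phi$ is trivial, forcing $\sigma_\phi = \id$, each $\phi_i$ trivial, and all twist data trivial, so $\phi$ is trivial. Finally, $p \circ \rho$ sends $\phi$ to a diffeomorphism inducing $\phi$ on $\pi_1(M)$, so $\eta \circ p \circ \rho = \id_{\Out(\pi_1(M))}$, which is the commutativity asserted in the diagram.
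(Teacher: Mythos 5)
Your approach is essentially the paper's: both decompose $\phi$ into Mostow-rigid piece isometries plus canonical twist data on collars, glue these along the p-affine boundary identifications, and use uniqueness of the resulting ``piecewise-isometric plus straight-line twist'' diffeomorphism to get the homomorphism property, with injectivity following from the commuting triangle. The paper is slightly more explicit in two places that your sketch leaves implicit: it builds a tube model $N\cong M$ (gluing collars $\partial_j M_i\times[-1,1]$ between pieces) so the twists have a canonical home, and it invokes Theorem~\ref{cusp isotopy} to upgrade the homotopy between $F_i|_{\partial}$ and $F_j|_{\partial}$ to the required equivariant isotopy; also, what you assert as an ``intrinsic characterization'' of the vertex subgroups is really the content of Lemma~\ref{vertex subgroup} (automorphisms carry vertex groups to conjugates of vertex groups), which is what the paper actually proves. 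These are presentational, not substantive, differences.
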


Even though Nielsen realization holds for these manifolds, not every such manifold admits a Riemannian metric such that $\Out(\pi_1(M)) \cong \Isom(M)$, in contrast to the case of Mostow-Prasad-Margulis rigidity. Hence smooth rigidity is the best we can ask for in the case of closed piecewise rank $1$ manifolds.

\begin{theorem}\label{weak rigidity}
Let $M$ be a closed piecewise rank $1$ manifolds of dimension $\geq 3$. If $\Out(\pi_1(M))$ is infinite, then $M$ does not admit any metric such that $\Out(\pi_1(M))$ can be realized as a group of isometries.
\end{theorem}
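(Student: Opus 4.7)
I would argue by contradiction: an isometric realization will force $\pi_1(M)$ to have nontrivial center, contradicting the rank $1$ structure of the pieces. Suppose a Riemannian metric $g$ admits an injective homomorphism $\rho\colon \Out(\pi_1(M)) \hookrightarrow \Isom(M,g)$. Since $M$ is closed, Myers--Steenrod makes $\Isom(M,g)$ a compact Lie group. A countably infinite subgroup of a compact Lie group cannot be closed (closed subgroups of Lie groups are Lie subgroups, hence either finite or uncountable), so the closure of $\rho(\Out(\pi_1(M)))$ has positive dimension, and its identity component contains a nontrivial circle. This produces an effective isometric $S^1$-action on $M$.

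Next I would invoke the Conner--Raymond theorem: any effective $S^1$-action on a closed aspherical manifold is free, and the orbit fibration yields a central extension
\[1 \longrightarrow \Z \longrightarrow \pi_1(M) \longrightarrow \pi_1(M/S^1) \longrightarrow 1.\]
In particular $Z(\pi_1(M))$ contains an infinite cyclic subgroup, so it suffices to prove $Z(\pi_1(M)) = 1$ for every closed piecewise rank $1$ manifold of dimension $\geq 3$.

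To establish this, I would use the graph-of-groups structure coming from the cusp decomposition --- finite since $M$ is compact --- with vertex groups $\pi_1(M_i)$ and edge groups the fundamental groups of the glued cusp cross-sections. On the associated Bass--Serre tree $T$, the group $\pi_1(M)$ acts minimally and without a global fixed vertex. For any $z\in Z(\pi_1(M))$, centrality gives $g\cdot \Fix(z) = \Fix(gzg^{-1}) = \Fix(z)$ for every $g\in\pi_1(M)$, so $\Fix(z)$ (respectively the axis of $z$, if $z$ is hyperbolic) is a $\pi_1(M)$-invariant subtree. By minimality, either $\Fix(z) = T$, in which case $z$ acts trivially on $T$, or $z$ is hyperbolic with axis equal to all of $T$, forcing $T$ to be a line and $\pi_1(M)$ to be virtually cyclic --- impossible for a closed aspherical manifold of dimension $\geq 3$ built from rank $1$ pieces. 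Hence $z$ fixes $T$ pointwise, sits inside some conjugate of a $\pi_1(M_i)$, and commutes with all of it, so $z\in Z(\pi_1(M_i))$. But each $\pi_1(M_i)$ is a torsion-free, irreducible, non-uniform lattice in a semisimple Lie group of noncompact type; by Borel density $Z(\pi_1(M_i))$ lies in the finite center of the ambient Lie group, which intersects the torsion-free $\pi_1(M_i)$ trivially. Thus $z=1$, contradicting the existence of the infinite cyclic central subgroup produced from the $S^1$-action.

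The main obstacle I anticipate is the degenerate-tree step --- ruling out that $T$ is a single line for some pathological gluing pattern. In the typical situation where some piece has more than one peripheral coset being glued, $T$ branches and the argument is immediate, but a single piece with a single self-gluing requires a direct check that the resulting HNN extension is not virtually cyclic; this follows because each $\pi_1(M_i)$ already fails to be virtually cyclic in dimension $\geq 3$.
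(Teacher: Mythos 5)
Your proof is correct and follows essentially the same route as the paper's: reduce to showing $\pi_1(M)$ is centerless via the Bass--Serre tree action, then invoke a Conner--Raymond-type theorem to rule out a positive-dimensional isometry group. The paper uses the finite-group version of Conner--Raymond (applied to a finite cyclic subgroup of a circle in $\Isom^0(M)$) rather than the $S^1$-action version you cite, and shows centerlessness by directly fixing each edge of $T$ using elements whose fixed set is exactly that edge rather than via minimality of the action, but these are cosmetic variations; one small imprecision in your version is the claim that the $S^1$-action is free and yields an orbit fibration --- the Conner--Raymond statement you actually need, and which does hold without freeness, is simply that $\pi_1(S^1) \to Z(\pi_1(M))$ is injective.
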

It turns out that there exist piecewise rank $1$ manifolds $M$ with infinite $\Out(\pi_1(M))$, unlike the locally symmetric case, for which the group $\Out(\pi_1(M))$ is finite. We will give examples of such manifolds.

In proving Theorem \ref{Mostow}, we obtain the following rigidity property of the pieces in decomposition of $M$ under a self homotopy equivalence of $M$.

\begin{theorem}\label{piece rigidity}
Let $M$ be a piecewise rank $1$ manifold of dimension $n > 2$. Let $M_i$, $i\in I$, be the locally symmetric pieces in the decomposition of $M$. Let $f \colon M \longrightarrow M$ be a homotopy equivalence. Then the restriction $f_i$ to $M_i$ is homotopic to a map $g \colon M_i \longrightarrow M$ that is a diffeomorphism onto a piece $M_j \subset M$. 
\end{theorem}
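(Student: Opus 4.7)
The plan is to combine a Bass-Serre-theoretic analysis of $\pi_1(M)$ with Mostow-Prasad-Margulis rigidity applied to each piece. The cusp decomposition of $M$ gives a graph-of-spaces structure whose vertex spaces are the $M_i$ and whose edge spaces are the glued boundary components; since each boundary is $\pi_1$-injective, this yields a graph-of-groups decomposition of $\pi_1(M)$ with vertex groups $\pi_1(M_i)$ and edge groups the cusp cross-section subgroups. Let $T$ denote the associated Bass-Serre tree. The key step is to show that every automorphism $\phi = f_*$ of $\pi_1(M)$ preserves this splitting up to equivariant isomorphism, so that $\phi$ sends $\pi_1(M_i)$ to a conjugate of some $\pi_1(M_j)$.

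To establish this, I would give an intrinsic algebraic characterization of the edge groups. A cusp cross-section group is virtually nilpotent when the corresponding piece has $\R$-rank $1$, and virtually polycyclic when the piece has $\Q$-rank $1$ but higher $\R$-rank. By Bass-Serre theory, every virtually polycyclic subgroup of $\pi_1(M)$ fixes a point of $T$ and so is conjugate into some vertex group $\pi_1(M_i)$; inside that rank-$1$ lattice, a standard argument (using the structure of parabolic subgroups and the thick-thin decomposition) confines it to a cusp subgroup. Hence the maximal virtually polycyclic subgroups of $\pi_1(M)$ are precisely the conjugates of cusp subgroups, and $\phi$ must permute them. The hypothesis $n>2$ ensures that no $\pi_1(M_i)$ is itself virtually polycyclic, so the vertex groups can then be recovered from the peripheral family as vertex stabilizers of the canonical JSJ-type tree $T$, and $\phi$ permutes the conjugacy classes of the $\pi_1(M_i)$. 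Verifying this canonicity of the decomposition is the main obstacle in the proof.

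Once $\phi(\pi_1(M_i))$ is known to be conjugate to $\pi_1(M_j)$ for some $j$, the restriction of $\phi$ is an isomorphism $\pi_1(M_i) \to \pi_1(M_j)$. Mostow-Prasad-Margulis rigidity, applicable because both $M_i$ and $M_j$ are finite-volume, irreducible, locally symmetric, nonpositively curved manifolds of dimension at least $3$, then produces an isometry between the interiors inducing this isomorphism on fundamental groups. This isometry extends smoothly to a diffeomorphism $g \colon M_i \to M_j$ of the compactifications, since the smooth structure on each cusp cross-section is determined by the parabolic subgroups of the ambient rank-$1$ isometry group. Composing $g$ with the inclusion $M_j \hookrightarrow M$ yields a map $M_i \to M$ inducing the same homomorphism on $\pi_1$ as $f|_{M_i}$ up to conjugation, and asphericity of $M$ then gives the required homotopy $f|_{M_i} \simeq g$ in $M$.
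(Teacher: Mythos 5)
Your algebraic characterization of the edge groups is wrong, and this breaks the core of the argument. In the $\Q$-rank $1$, $\R$-rank $\geq 2$ setting the cusp cross-section group $\Gamma_P$ sits in an extension
\[ 1 \longrightarrow \Gamma_{N_P} \longrightarrow \Gamma_P \longrightarrow \Gamma_{M_P} \longrightarrow 1, \]
where $\Gamma_{N_P}$ is a finitely generated nilpotent lattice in the unipotent radical, but $\Gamma_{M_P}$ is a cocompact lattice in the reductive Levi factor $M_P$, which in general has a nontrivial semisimple part. Such a $\Gamma_{M_P}$ contains nonabelian free subgroups, so $\Gamma_P$ is not virtually solvable, let alone virtually polycyclic. (The paper makes this distinction from the $\R$-rank $1$ case explicit: the cusp cross-section is finitely covered by a nilmanifold bundle over a compact nonpositively curved locally symmetric base, not by an infranilmanifold.) Even in the special cases where the edge group does happen to be polycyclic, the Bass--Serre fact you invoke is too weak: a finitely generated nilpotent or polycyclic group acting on a tree without inversion either fixes a point or translates along an invariant axis, and one still has to rule out the axis case; that is exactly what Case 3 of the paper's Lemma~\ref{cusp subgroups} must do by hand.

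The paper's route is genuinely different, and you should compare. To show edge groups go to conjugates of edge groups (Lemma~\ref{cusp subgroups}) it splits into cases: when the vertex group's universal cover has no real or complex hyperbolic factor the lattice has property (T), hence (FA), so $\phi(\pi_1(M_i))$ fixes a vertex of $T$; otherwise it runs a careful case analysis on the tree action of the image of the maximal unipotent $F\leq C$. To locate the image of a cusp group inside a vertex group it uses Lemma~\ref{cusp embedding}, which relies on a nontrivial normal abelian subgroup of $C$ mapping to parabolic isometries, uniqueness of the cusp point at infinity fixed by a parabolic, and the solvable non-virtually-abelian criterion of Property~(c) --- not polycyclicity. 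Finally, to promote preservation of edge groups to preservation of vertex groups (Lemma~\ref{vertex subgroup}) the paper does not invoke any JSJ-canonicity theorem: it argues geometrically in the universal cover, showing a $\phi$-equivariant quasi-isometry sends quasi-horospheres to quasi-horospheres and preserves adjacency of horospheres, so vertex stabilizers go to vertex stabilizers. You correctly flag canonicity as ``the main obstacle,'' but the algebraic mechanism you propose for it does not hold. Your final paragraph --- Mostow--Prasad rigidity on the interiors, extension over the Borel--Serre boundary, and using asphericity of $M$ to produce the homotopy --- matches the paper.
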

That is, any homotopy equivalence of $M$ must preserve the decomposition up to homotopy. This is the first step in proving Theorem \ref{Mostow}. Pushing to a global diffeomorphism is the other step.

We obtain the following description of the group of self homotopy equivalences of $M$, or $\Out(\pi_1(M))$. By an open cusp of $M$ we mean a cusp of a piece $M_i$ in the decompostion of $M$ that is not glued to another cusp. 
\begin{theorem}[Computation of $\Out(\pi_1(M))$]\label{Out(pi_1(M))}
Let $M$ be a piecewise rank $1$ manifold of dimension $\geq 3$. Then the outer automorphism group $\Out(\pi_1(M))$ is an extension of a free abelian group $\mathcal{T}(M)$ by a group $\mathcal{A}(M)$, i.e. the following sequence is exact.
\[1 \longrightarrow \mathcal{T}(M) \longrightarrow \Out(\pi_1(M)) \longrightarrow \mathcal{A}(M) \longrightarrow 1.\] 
The group $\Out(\pi_1(M))$ is infinite if the fundamental group of one of the non-open cusps in the decomposition of $M$ has nontrivial center. In fact, $\mathcal{T}(M)$ is isomorphic to the direct sum of the centers of the non-open cusp groups of $M$. If $M$ has finitely many locally symmetric pieces, then $\mathcal{T}(M)$ is finitely generated and $\mathcal{A}(M)$ is finite.
\end{theorem}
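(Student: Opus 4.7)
The plan is to exploit the graph-of-groups structure on $\pi_1(M)$ coming from the piecewise decomposition: by van Kampen, $\pi_1(M)$ is the fundamental group of a graph of groups $\mathcal{G}$ whose vertex groups are the piece groups $\pi_1(M_i)$ and whose edge groups are the fundamental groups $N_C$ of the non-open cusps $C$. For each non-open cusp $C$ and each $z \in Z(N_C)$, I would construct a Dehn twist $\tau_z$ which acts by conjugation by $z$ on one of the two vertex groups adjacent to $C$ (or on the stable letter, in the HNN case) and trivially on the other. Since $z$ centralizes $N_C$, $\tau_z$ is a well-defined automorphism of $\pi_1(M)$, and because swapping the two sides changes $\tau_z$ by an inner automorphism, its outer class is unambiguous. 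Varying over all non-open cusps gives a homomorphism $T \colon \bigoplus_C Z(N_C) \longrightarrow \Out(\pi_1(M))$ whose image I call $\mathcal{T}(M)$.

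Next I would verify that $T$ is injective and that $\mathcal{T}(M)$ is normal. Both rest on two geometric properties of cusp subgroups: the subgroup $N_C$ is malnormal in the ambient piece group $\pi_1(M_i)$, and the centralizer in $\pi_1(M_i)$ of any nontrivial $z \in Z(N_C)$ lies inside $N_C$. The malnormality forces distinct cusps to contribute independently, so $T$ is injective; the centralizer condition implies that $\tau_z$ is inner only when $z=1$, yielding a free abelian image finitely generated whenever $M$ has finitely many pieces. Normality of $\mathcal{T}(M)$ in $\Out(\pi_1(M))$ follows from Theorem \ref{piece rigidity}: any outer automorphism permutes the conjugacy classes of cusp subgroups and therefore conjugates twists to twists.

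For the quotient $\mathcal{A}(M) := \Out(\pi_1(M))/\mathcal{T}(M)$, I would invoke Theorem \ref{piece rigidity} to show that every $\alpha \in \Out(\pi_1(M))$, after adjusting by an inner automorphism, carries each vertex group onto another and each edge group onto another; thus $\alpha$ induces an automorphism of the underlying graph of $\mathcal{G}$ together with isomorphisms $\pi_1(M_i) \to \pi_1(M_{\sigma(i)})$. By Mostow-Prasad-Margulis rigidity each such piece isomorphism lies in a finite set up to inner automorphisms of the target, so $\mathcal{A}(M)$ embeds in a finite extension of $\prod_i \Out(\pi_1(M_i))$ by $\Aut(\mathcal{G})$, giving finiteness whenever $M$ has finitely many pieces. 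The main obstacle is identifying the kernel of this quotient with $\mathcal{T}(M)$: I would invoke the Bass-Jiang / Levitt description of the automorphism group of a graph of groups, which says that an outer automorphism acting trivially on every vertex group and preserving every edge is a product of Dehn twists, with twist parameters living in the centralizer of the edge group inside the adjacent vertex group. By the centralizer computation in the second paragraph, these parameters are forced into $Z(N_C)$, identifying the kernel with $\mathcal{T}(M)$ and completing the short exact sequence.
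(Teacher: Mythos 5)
Your plan follows the same architecture as the paper's argument: build Dehn twists around central elements of the non-open cusp groups to get $\mathcal{T}(M)$, use Theorem~\ref{piece rigidity} to reduce the quotient to a question about isomorphisms between pieces, feed those through Mostow--Prasad--Margulis rigidity to land in a finite group when there are finitely many pieces, and then identify the kernel with $\mathcal{T}(M)$. The one organizational difference is that the paper goes through a concrete homomorphism $\eta \colon \Out(\pi_1(M)) \to \Isom(\mathcal{M})$ (where $\mathcal{M}$ is the disjoint union of the complete locally symmetric pieces), takes $\mathcal{A}(M)$ to be its image, and proves $\Ker\eta = \mathcal{T}(M)$ directly, whereas you define $\mathcal{A}(M)$ tautologically as the quotient and invoke the Bass--Jiang/Levitt description of automorphisms of graphs of groups to carry out the kernel identification. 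The Bass--Jiang/Levitt framework is a perfectly good way to organize this and is in some ways cleaner, since the paper's kernel argument (that if $\phi\in\Ker\eta$ then $\phi$ is a product of twists) is fairly compressed and implicitly uses exactly the centralizer computation you state.

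One concrete concern: you assert that the edge group $N_C$ (the cusp subgroup) is malnormal in the ambient piece group $\pi_1(M_i)$ and use this to conclude that distinct cusps contribute independently, so that $T$ is injective. Malnormality of cusp subgroups is true for $\R$-rank~$1$ lattices but is \emph{false} in general for the higher $\R$-rank, $\Q$-rank~$1$ pieces this paper allows. For example, in a Hilbert modular group, two distinct cusp subgroups can share a hyperbolic element whose axis joins the two corresponding parabolic fixed points at infinity; the intersection of the two cusp groups then contains an infinite cyclic subgroup coming from units in the ring of integers. Fortunately the injectivity of $T$ does not actually need malnormality: it is enough that each vertex group $\pi_1(M_i)$ is centerless (it is a torsion-free lattice in a semisimple Lie group) and that the centralizer of a vertex group in $\pi_1(M)$ is trivial, which follows from the Bass--Serre fixed-point argument the paper already uses in Lemma~\ref{twist}. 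You should replace the malnormality claim with that argument. The remaining ingredient you need and do state correctly --- that the centralizer in $\pi_1(M_i)$ of a nontrivial $z\in Z(N_C)$ is contained in $N_C$ --- should be justified (it is essentially Property~(e) together with the fact that a parabolic element has a unique cusp fixed point), but it is the right condition to use.
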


The groups $\mathcal{T}(M)$ and $\mathcal{A}(M)$ in the above theorem will be defined in Section \ref{twists and turns}. The elements of $\mathcal{T}(M)$ are called \emph{twists}, which are similar to Dehn twists in surface topology, around loops in the center of a cusp subgroup. It is these twists that prevent $\Out(\pi_1(M))$ to be realized as a group of isometries. Indeed, if $M$ is a closed piecewise rank $1$ manifold of dimension $\geq 3$ with centerless cusp subgroups, then Theorem \ref{Out(pi_1(M))} implies that $\Out(\pi_1(M))$ is finite. By Theorem \ref{Nielsen}, $\Out(\pi_1(M)$ can be realized as finite group of diffeomorphisms and thus, a group of isometries with respect to some metric on $M$. 

One can find an abundance of examples of locally symmetric manifolds with centerless cusp subgroups. For instance, the Hilbert modular manifolds has centerless cusp subgroups. In general, if one of the simple factors of the isometry group of the locally symmetric manifold $M$ is of $\R$ rank $1$, then the cusp subgroups of $M$ is centerless.

Theorems \ref{Mostow}, \ref{Nielsen}, \ref{piece rigidity} and \ref{Out(pi_1(M))} have been proved for the case of gluing complete, finite volume, locally symmetric manifolds with negative curvature, i.e. those with $\R$-rank $1$, in \cite{Tamrigidity}, where we called them \emph{cusp-decomposable} manifolds and proved smooth rigidity within this class of manifolds. Hence, in proving the above theorems, we will focus only on the case of gluing other rank $1$ manifolds, that is, those with $\R$-rank $\geq 2$ and $\Q$-rank $1$.  

In \cite{Tamrigidity}, we rely heavily on virtual nilpotency of the cusp and that the pieces are negatively curved. These no longer hold for the higher $\R$-rank case. The cross section of a cusp of a $\Q$-rank $1$ manifold is not always an infra-nilmanifold like the $\R$-rank $1$ case, but is finitely covered by a fiber bundle with fiber a compact nil-manifold and base a compact, nonpositively curved, locally symmetric space. 
However, we still can use nonpositively curved geometry, together with properties of higher rank latices, such as (T) and (FA) in some cases, to obtain the above results about the group of homotopy equivalences of such manifolds. 

This paper is organized as follows. We define piecewise rank $1$ manifolds in Section \ref{piecewise Q-rank 1}. We describe the structure of their fundamental groups and prove Theorem \ref{piece rigidity} in Section \ref{pi_1}. We discuss the structure of $\Out(\pi_1(M))$ and prove Theorem \ref{Out(pi_1(M))} in Section \ref{twists and turns}. We prove Theorems \ref{Mostow}, \ref{Nielsen} and \ref{weak rigidity} in Section \ref{smooth rigidity}. Backgrounds on $\Q$-rank $1$ locally symmetric spaces and properties used in the proofs of the main theorems can be found in Section \ref{Q rank 1}.

\begin{acknowledgement}
I would like to thank my advisor Benson Farb for suggesting the problem and for his guidance and constant encouragement. I would like to thank Ilya Gekhtman and Dave Witte Morris for their generous help with the theory of arithmetic lattices. I would also like to thank Shmuel Weinberger for helpful conversations. I would like to thank Grigori Avramidi and Benson Farb for useful comments on earlier versions of this paper. 

\end{acknowledgement}

\section{$\Q$-rank $1$ locally symmetric spaces}\label{Q rank 1} 
In this section, we will recall or state the properties of $\Q$-rank $1$ manifolds that we will need in order to prove the main theorems.
 
\subsection{General structure}\label{general structure}
Let $G$ be a semisimple, linear, connected algebraic group $G$ defined over $\Q$ that have $\Q$-rank equal to $1$, i.e. a maximal $\Q$ split torus $A$ of $G$ is isomorphic to $\R$. Let $K$ be a maximal compact subgroup of $G$, and let $\Gamma$ be an arithmetic lattice of $G$, i.e. $\Gamma$ is commensurable to the $\Z$ points $G_\Z$ of $G$. 

A manifold $M$ is an arithmetic manifold of $\Q$-rank $1$ if $M = \Gamma \backslash G/K$, where $G$, $K$, and $\Gamma$ are as above and $\Gamma$ is torsion free. The universal cover $X$ of $M$ is $G/K$, which is a symmetric space of noncompact type. If $x_0$ be a basepoint that corresponds to $K$, then a $\Q$ split torus $A$ represents a geodesic in $X$ going through $x_0$.

$\Q$-rank $1$ manifolds have a thick-thin decomposition like noncompact, complete, finite volume, hyperbolic manifolds. A $\Q$-rank $1$ manifold $M$ has finitely many ends or \emph{cusps}. Each cusp is diffeomorphic to $[0, \infty) \times S$, for some compact $(n-1)$--dim manifold $S$. The fundamental group of each cusp, or each \emph{cusp group}, corresponds to a maximal subgroup of $\pi_1(M)$ of isometries fixing a point on the boundary at infinity of $\widetilde{M}$. Recall that the boundary at infinity $\partial_\infty \widetilde{M}$ of $\widetilde{M}$ is the set of equivalent geodesic rays in $\widetilde{M}$. The parametrization $[0,\infty) \times S$ of a cusp can be taken so that each cross section $a\times S$ is the quotient of a horosphere in $\widetilde{M}$ by the corresponding cusp subgroup. 

The cusps of $M$ correspond to conjugacy classes in $\Gamma$ of $\Q$ proper parabolic subgroups of $G$. Let $P$ be a proper parabolic $\Q$ subgroup of $G$. Since $G$ is $\Q$-rank $1$, the parabolic $P$ is minimal among all $\Q$ parabolic subgroups \cite[Proposition 8.35]{Witte}. There is a $\Q$ split torus $A$ of $G$, and an element $a \in A$ such that 
\[ P = \{g \in G \; | \; \lim_{n \rightarrow \infty} ||a^{-n}ga^n|| < \infty\}.\] 
Geometrically, this means that $P$ is the stabilizer of the point at the boundary at infinity defined by the ray $a^t$ for $t \rightarrow \infty$, which we denote by $a_+$. This is because the distance between $ga^t$ and $a^t$ stays bounded as $t \rightarrow \infty$, that is, $g$ fixes the point $(a_+)_\infty$ on the boundary at infinity corresponding to the ray $a_+$.

Let $N_P$ be the unipotent radical of $P$. Then 
\[N_P = \{g \in G \;|\; \lim_{n \rightarrow \infty} a^{-n}ga^n = e\}.\]
Each element $g$ of $N_P$ is an isometry of $X$ that does not just fix $(a_+)_\infty$ but takes $a_+$ to a geodesic ray $ga_+$ whose distance from $a_+$ is $0$. More precisely, the distance $d(a^t,ga^t) \rightarrow 0$ as $t \rightarrow \infty$. 

The Langlands decomposition of a rational parabolic $P$ is $P = M_PAN_P = N_PM_PA$, where $N_P$ is the unipotent radical of $P$, the group $A$ is a $\Q$ split torus, and $M_P$ is a reductive subgroup of $P$. The groups $M_P$, $A$ and $N_P$ are defined over $\Q$, and $M_PA = C_G(A)$, the center of $A$ in $G$. The latter fact together with $N_P$ being normal in $P$ implies that $A$ normalizes $M_PN_P$. Geometrically, this means that $M_PN_P$ preserves and acts transitively of each horosphere of $X$ centered at $(a+)_\infty$.

The Langlands decomposition of $P$ gives $X$ a \emph{horospherical decomposition} (\cite{Ji})

\[ X = N_P \times A \times X_P,\]
where $X_P$ is the symmetric space corresponding to $M$, i.e. $X_P = M_P/(M_P \cap K)$. Since $M_P$ is reductive, the symmetric space $X_P$ is nonpositively curved.   

Let $\Gamma_P = \Gamma \cap P$ and $\Gamma_{N_P} = \Gamma \cap N_P$. Then $\Gamma_{N_P}$ is an arithmetic subgroup of $N_P$ and $N_P/\Gamma_{N_P}$ is compact. The group $\Gamma_P$ is a cusp group of $\pi_1(M) \cong \Gamma$. The image of $\Gamma_P$ under the projection $p \colon P \longrightarrow P/N_P$ is contained in $M_P$ is an arithmetic lattice of $M_P$ (\cite{BJloc}), which we denoted by $\Gamma_{M_P}$. Then $\Gamma$ has the structure given by the short exact sequence 

\[ 1 \longrightarrow \Gamma_{N_P} \longrightarrow \Gamma_P \longrightarrow \Gamma_{M_P} \longrightarrow 1, \]
which needs not split. The group $\Gamma_{N_P}$ is the fundamental group of a compact nilmanifold and $\Gamma_{M_P}$ is virtually the fundamental group of a compact locally symmetric space with nonpositive curvature.

A $\Q$-rank $1$ arithmetic manifold $M$ has a compactification $\overline{M}$ that is a manifold with boundary whose interior is diffeomorphic to $M$. One can take $\overline{M}$ to be the Borel-Serre compactification of $M$ using the horosphere decomposition (\cite{Ji}). Or one can equivalently take $\overline{M}$ the manifold obtained by deleting the ends of the cusps of $M$. That is, each end of $M$ is homeomorphic to $S \times [0, \infty )$, where $S = X_P/\Gamma_P$ and $[0,\infty) \cong a_+$, so if one deletes $S \times [1,\infty)$, one obtains a compact manifold with boundary with interior diffeomorphic to $M$.

The properties of $\Q$-rank $1$ locally symmetric spaces that we will need in order to prove the results of this paper are the following. We will prove these after stating them.

\begin{enumerate}
\item[a)]The cross section $C$ of a cusp of $M$ is finitely covered by a manifold $\widehat{C}$ that has the structure of a fiber bundle with fiber a compact nilmanifold $F$ and base a compact locally symmetric manifold $B$ with nonpositive curvature that can contain a local Euclidean factor. The fundamental group of $\widehat{C}$ has the structure given by the following extension
\[ 1 \longrightarrow \pi_1(F) \longrightarrow \pi_1(\widehat{C}) \longrightarrow \pi_1(B) \longrightarrow 1, \] 
where $\pi_1(F) = \Gamma_{N_P}$, and $\pi_1(B)$ is the fundamental group of a compact, nonpositively curved, locally symmetric space. By the Godement compactness criterion (Theorem $5.30$ in \cite{Witte}), the group $\pi_1(F)$ is nontrivial. 
\item[b)] Any deck transformation on the universal cover $\widetilde{X}$ that preserves two distinct horospheres corresponds to an element outside $N_P$. 
\item[c)] Each cusp subgroup of $M$ contains a solvable non-virtually-abelian subgroup if $G$ has $\R$-rank greater than $2$.
\item[d)] Any isometry of $M$ extends to a diffeomorphism of $\overline{M}$.
\item[e)] The center of $M_PN_P$ is contained in $N_P$. 
\item[f)] The unipotent radical $N_P$ is a $2$-step nilpotent group. 

\end{enumerate}

\begin{proof}[Proof of Property (a)]
This follows from the Langlands decomposition of $\Q$ parabolic subgroups of $G$. To see this, let 
$\Lambda$ be a finite index, torsion free subgroup of $\Gamma_{M_P}$. Then $\Gamma_{N_P}\rtimes \Lambda$ corresponds to a finite-sheeted cover $\widehat{C}$ of $C$ that has the structure of a fiber bundle with fiber $F = N_P/\Gamma_{N_P}$ and base $B = X_P/\Lambda$. The group $\Gamma_{N_P}$ is a cocompact lattice in $N_P$ (see \cite{Witte}). The manifold $X_P/\Lambda$ is a compact, locally symmetric, nonpositively curved manifold since $\Lambda$ is torsion free. 
\end{proof}
 
\begin{proof}[Proof of Property (b)]
This follows from the fact that two distinct parabolics cannot share any nontrivial unipotent element (\cite{Witte}). If $P'$ is another proper parabolic subgroup of $G$, then $P'$ is rationally conjugate to $P$ and the intersection $P \cap P'$ cannot contain any nontrivial unipotent elements of $P$ or $P'$ by \cite[Theorem 8.27]{Witte}. 
\end{proof}

\begin{proof}[Proof of Property (c)]
Since $G$ has $\R$-rank $\geq 2$, the group $M_P$ in the Langlands decomposition of a proper parabolic $P$ is nontrivial. Let $g$ be an element of $M_\Z$ that is contained in $\Gamma$. We claim that no finite power of $g$ centralizes $N_\Z$, from which it follows that $g\ltimes N_\Z$ is a solvable subgroup that is not virtually abelian. Suppose that (a power of) $g$ centralizes $N_\Z$. Then $g$ centralizes $N$ since $N_\Z$ is Zariski dense in $N$. Let $T$ be a maximal $\Q$ torus of $G$ that contains $t$, and let $\alpha$ be a root of $T$ that occurs in the Lie algebra of $N$. Then $\alpha$ is trivial on $t$, so $-\alpha$ is trivial on $t$. Hence, $t$ centralizes the opposite unipotent $N^-$. Now $N$ and $N^-$ generates $G$. So $g$ is an infinite order element in the center of $G$, which is a contradiction. 
\end{proof}

Property (d) follows from super rigidity (\cite{Zimmer}) and the fact that the $G_\Q$ action on $X$ extends analytically to the Borel-Serre compactification $\overline{X}$ (\cite{Ji}).

\begin{proof}[Proof of Property (e)]
Firstly, we show that any element $g$ in the identity component $Z^0$ of the center $Z$ of $M_PN_P$ that is not in $N_P$ must commute with the Borel subgroup of $G$. This will be a contradiction since the centralizers of Borel subgroups are trivial. Then we will show that $Z$ must be connected.

Let $T$ be a maximal $\R$-split torus and let $\mathfrak{a}$ be the Lie algebra of $T$.  Then the Lie algebra of the unipotent radical $N_P$ of $P$ is 
\[ \mathfrak{n}_I = \sum_{\alpha \in \Phi^+ \setminus \Phi_I} g_\alpha,  \]
where $\Phi^+$ is the set of positive roots and $\Phi_I \subset \Phi^+$ is a set of simple roots. Let
\[\mathfrak{a}_I = \cap_{\alpha\in I}\ker \alpha. \]
Let $\mathfrak{a}^I$ be the orthogonal complement of $\mathfrak{a}_I$. Then the Lie algebra of $M_P$ is
\[ \mathfrak{m}_I = \mathfrak{a}^I \oplus \sum_{\alpha \in \Phi_I} g_\alpha \oplus l,\]  
for $l$ a subspace of the centralizer of $\mathfrak{a}$. (See \cite{BJloc} for details of the above). Now, since $T$ normalizes $M_PN_P$, it normalizes $Z$. Hence, the Lie algebra $\mathfrak{z}$ of $Z^0$ is a direct sum of subspaces of root spaces $g_\alpha$. If $\mathfrak{z}$ contains elements $x$ in a root space $g_\beta$ not in $\Phi_I$. Observe that $\beta$ cannot be $0$ (for $g_\beta$ commutes with $g_\alpha$ for all $\alpha \in \Phi_I$, which means that $g_\beta \subset \mathfrak{a}_I$, which intersects with $\mathfrak{m}_I$ trivially.) 

If $\beta \in \Phi_I$, then $x$ commutes  with $\mathfrak{n}\oplus\mathfrak{m}$ and $\mathfrak{a}_I$ since $\mathfrak{a}_I \subset \ker g_\beta$. Therefore, $x$ commutes with $\Phi^+  \oplus \mathfrak{a}^I$. Thus,  $x$ commutes with a Borel subgroup contained in $P$. Since the centralizer of any Borel subgroup is trivial, this is a contradiction. So $\mathfrak{z}$ is contained in $\mathfrak{n}_I$. Hence, $Z^0$ is contained in the unipotent radical $N_P$ of $P$.

If $g_\beta \subset l$, then $x$ commutes with $\mathfrak{a}$ (since $l$ is a subspace of the centralizer of $\mathfrak{a}_I$) and $\Phi^+$. Hence, $x$ commutes with the Borel subgroup and again, we get a contradiction. 

Now, we prove that $Z$ is connected. Suppose it is not. Then there is an element $g$ in $Z$ such that $g = mn$, for $n \in N_P$ and $m \in M_P -\{1\}$. Let $X_P$ be the symmetric space $(K\cap M_P)\backslash M_P$. Then $m$ acts on $X_P$, and the action of $m$ commutes with $M_P(\Z)$, which is a lattice of $M_P$. Thus, $m$ commutes with the isometry group of $X_P$. 

Since $X_P$ is isometrically a product of symmetric space of noncompact type $Y$ and a Euclidean space $E$, it follows that $m$ must split into a product of two isometries, each of which preserves either $Y$ or $E$. Hence, $m$ acts by a translation on $E$ and by the identity on $Y$. Since each translation can be connected to the identity via a one parameter subgroup (namely translation with the same direction but different displacement), this means that if the above translation is nontrivial, then there is vector in $\mathfrak{z}$ that is not in $\mathfrak{n}_I$, which is a contradiction to above. Hence, $m$ acts on $X_P$ trivially, which implies that $\exp(m) \in K\cap M_P$. This means that $m$ is an isometry of $X_P$ the preserves the fiber $N_P$. Thus, the restriction of $m$ to each fiber has to be an isometry.

By \cite[Theorem 9]{Tamrigidity} and Property (f), the element $m$ has to be in the center of $N_P$, which is a contradiction to the assumption that $m \in M_P$. Therefore, $Z^0$ contains elements outside $N_P$, which is a contradiction.       
\end{proof}

\begin{proof}[Proof of Property (f)]
This follows from the fact that the commutator subgroup $[N_P,N_P]$ is central in $N_P$ (see \cite{Prasad}).  
\end{proof}

\subsection{Examples of manifolds of $\Q$-rank $1$ and $\R$-rank $\geq 2$} \label{examples} We give a few examples this type of manifolds.

\begin{itemize}
\item[1)] \textbf{Hilbert modular manifolds.}

Let $X$ be a product of $n$ copies of the hyperbolic plane, i.e. $X = \H^2 \times \H^2 \times ... \times \H^2$. Let $k$ be a totally real number field and $[k:\Q] = n$. Let $\mathcal{O}_k$ be the ring of integers of $k$ and let $\sigma_1, \sigma_2, ..., \sigma_n$ be the $n$ embeddings of $k$. The group $\PSL(2,\mathcal{O}_k)$ acts on $X$ discretely with finite volume quotient via the following representation
\[ \rho \colon\PSL(2,\mathcal{O}_k) \longrightarrow \PSL(2, \R)^n\]
defined as 
\[ \rho(M) = (\sigma_1(M), \sigma_2(M), ..., \sigma_n(M)).\]
Let $\Gamma$ be a torsion free, finite index subgroup $\PSL(2,\mathcal{O}_k)$. A manifold $M$ is a \emph{Hilbert modular} manifold if it is $X/\Gamma$ for some $X$ and $\Gamma$ as above. The $\R$-rank of $X$ is $n$ and the $\Q$-rank of $M$ is $1$.
\item[2)] \textbf{An example where the cusp groups have nontrivial center: a noncocompact lattice in $\SL(3,\R)$ of $\Q$-rank $1$ (\cite[Section 6E]{Witte}).}

Let $d$ be a square free positive integer. Let $L = \Q[\sqrt{d}]$ be the real quadratic extension of $\Q$ by $\sqrt{d}$. Let $J$ be the following matrix
\[\left(\begin{matrix} 0 & 0 & 1 \\ 0 & 1 & 0 \\ 1 & 0 & 0 \end{matrix}\right).\]
Let $\Gamma = \{g \in \SL(3, \Z[\sqrt{d}] |\; \overline{g}^T J g = J\}$, where $\overline{g}$ is the matrix whose entries are the Galois conjugate of the corresponding entries of $g$. As in \cite[Section 6E]{Witte}, the group $\Gamma$ is a $\Q$-rank $1$ lattice of $\SL(3,\R)$ and the unipotent radical of a maximal $\Q$-parabolic is conjugate to the unipotent upper triangular matrices. That is, in this case,  
\[P = \left(\begin{matrix} * & * & * \\ 0 & * & * \\ 0 & 0 & * \end{matrix}\right), \quad \text{and}\quad N_P = \left(\begin{matrix} 1 & * & * \\ 0 & 1 & * \\ 0 & 0 & 1 \end{matrix}\right).\]
Hence, $M_P \cong \R$, and $\Gamma_{M_P} \cong \Z$. A cusp subgroup $C$ of $\Gamma$ has the structure of the following (split) exact sequence (since $\Z$ is free) 
\[ 1 \longrightarrow H \longrightarrow C \longrightarrow \Z \longrightarrow 1,\]
where $H$ is a lattice of the $3$-dimensional Heisenberg group (i.e. the unipotent upper triangular $3 \times 3$ matrices). Since the center of $H$ is characteristic and is isomorphic to $\Z$, by taking finite index subgroup $\Gamma'$ of $\Gamma$, we can make a cusp group $C'$ of $\Gamma'$ to have the following structure split exact sequence 
\[ 1 \longrightarrow H \longrightarrow C' \longrightarrow \Z \longrightarrow 1,\]
where $\Z$ acts on the center $Z(H)$ of $H$ trivially. It follows that the center of the cusp group of $\Gamma'$ contains $Z(H) \ne 1$.   
\end{itemize}

\subsection{Deck transformations and subgroups of $\pi_1(M)$}
In this section, we will give a few properties of deck transformations on the universal cover of a $\Q$-rank $1$ arithmetic manifold $M$ that we will need in proving our results.

Let $M$ be an arithmetic manifold of $\Q$-rank $1$. For each cusp subgroup of $M$, the corresponding deck transformations on the universal cover $\widetilde{M}$ preserves a family of \emph{concentric} horospheres, i.e. these horospheres have the same center in the boundary at infinity $\partial_\infty M$. We call this center point a \emph{cusp point at infinity}. As a general rule in this paper, whenever horospheres are mentioned in this paper, they are the ones that are preserved by some cusp subgroup of $M$. 

For any two distinct cusp points at infinity $x$ and $y$ of $\widetilde{M}$, there is a geodesic connecting $x$ and $y$. This is because there is a point $p\in\widetilde{M}$ and a flat containing the geodesic rays $\gamma_{px}$ connecting $p$ with $x$ and $\gamma_{py}$ connecting $p$ with $y$ (\cite[Proposition 2.21.14]{Eberlein}). If the angle $\angle xpy$ is less that $\pi$, then any horospheres $H_x$ centered at $x$ and $H_y$ centered at $y$ have nonempty intersection. But this contradicts the fact that the projections of $H_x$ and $H_y$ in the quotient $M$ (which are cross sections of cusps of $M$) can be made disjoint by going far enough down the cusp.

It follows that a parabolic deck transformation fixes only one cusp point at infinity of $\widetilde{M}$. This is because if a parabolic deck transformation $g$ fixes two distinct cusp points $x$ and $y$, then let $\gamma$ be a geodesic connecting $x$ to $y$. Then $g$ preserves set of all (parallel) geodesics connecting $x$ and $y$, which form a totally geodesic set $W$ that is a metric product $\gamma \times U$, where $\gamma$ is a geodesic connecting $x$ and $y$ and $U$ is a convex set. Now $g$ acts on $W$ preserving each slice of $U$ (i.e. $t\times U$) since $g$ preserves horospheres centered at $\gamma(\infty) = x$. The restriction of $g$ to $U$ is an isometry of $U$ that must be semisimple because the quotient of the horosphere centered at $x$ by the corresponding cusp group is compact. This means that the displacement distance of $g$ is realized (and is the translation distance in $W$). But this contradicts the fact that $g$ is parabolic. 

\begin{lemma}\label{cusp embedding}
Let $M^n$ and $N^n$ be  $\Q$-rank $1$, arithmetic manifolds of dimension $n \geq 3$, and assume that $M$ is irreducible. Let $C$ be a cusp of $M$. Let $\phi \colon \pi_1(C) \longrightarrow \pi_1(N)$ be an injective homomorphism. Then $\phi(\pi_1(C))$ has a finite index subgroup that is a finite index subgroup of a cusp subgroup of $N$.  
\end{lemma}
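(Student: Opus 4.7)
The plan is to analyze the image of the distinguished nilpotent normal subgroup $\pi_1(F)$ of $\pi_1(C)$ first, then bootstrap to all of $\pi_1(C)$ by a normalizer argument, and finally upgrade inclusion to finite index by a dimension count. Throughout, write $\Gamma' = \pi_1(N)$, sitting as a $\Q$-rank one lattice in a semisimple group $G'$, and recall from Property (a) that (after passing to a finite-index subgroup of $\pi_1(C)$, which we silently do) we have an extension $1 \to \pi_1(F) \to \pi_1(C) \to \pi_1(B) \to 1$ with $\pi_1(F) = \Gamma_{N_P}$ a nontrivial, torsion-free, finitely generated nilpotent group by Godement compactness.

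The first step is to show that $\phi(\pi_1(F))$ lies virtually in the unipotent radical of a proper $\Q$-parabolic of $G'$. This is where the main technical work sits. The image $\phi(\pi_1(F))$ is a nontrivial discrete torsion-free nilpotent subgroup of $\Gamma'$, and a classical result (Kolchin plus arithmeticity, cf.\ Raghunathan) forces some finite-index subgroup of it to consist of unipotent elements. Equivalently, viewing the action on $\widetilde{N}$, the nilpotent group $\phi(\pi_1(F))$ cannot fix a point in $\widetilde{N}$ (the stabilizer would be compact, hence $\phi(\pi_1(F))$ would be finite, contradicting torsion-freeness and nontriviality), and a nilpotent group of isometries of a symmetric space of noncompact type with no interior fixed point must fix a point at infinity. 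The minimum-displacement / horosphere analysis, together with the fact that $\phi(\pi_1(F))$ is finitely generated and grows polynomially, forces this fixed point at infinity to be a cusp point $\xi$ of $\widetilde{N}$: the group acts by parabolic isometries preserving each horosphere through $\xi$, and so sits inside the unipotent radical $N_{P'}$ of the unique proper $\Q$-parabolic $P'$ determined by $\xi$. Uniqueness of $P'$ (equivalently, of $\xi$) follows from Property (b) applied inside $G'$: two distinct $\Q$-parabolics share no nontrivial unipotent element.

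For the second step, recall that $\pi_1(F)$ is normal in $\pi_1(C)$, so $\phi(\pi_1(C))$ normalizes $\phi(\pi_1(F))$. After replacing $\phi(\pi_1(C))$ by a finite-index subgroup so that it normalizes the unipotent finite-index subgroup of $\phi(\pi_1(F))$ produced in Step~1, any such normalizing element must send the fixed cusp point $\xi \in \partial_\infty \widetilde{N}$ to another fixed point of $\phi(\pi_1(F))$, hence back to $\xi$. Therefore $\phi(\pi_1(C))$ virtually lies in the stabilizer of $\xi$ in $\Gamma'$, which is by definition the cusp subgroup $\Gamma'_{P'}$ of $N$ at $\xi$.

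Finally, to upgrade the inclusion $\phi(\pi_1(C)) \subset \Gamma'_{P'}$ (valid on a finite-index subgroup) to equality up to finite index, use the fact that both $\pi_1(C)$ and $\Gamma'_{P'}$ are (virtually) fundamental groups of closed aspherical manifolds of dimension $n-1$, namely the cross-sections of the respective cusps. Both are therefore Poincar\'e duality groups of cohomological dimension exactly $n-1$. Since $\phi$ is injective, $\phi(\pi_1(C))$ is itself a PD group of dimension $n-1$ sitting inside the PD group $\Gamma'_{P'}$ of the same dimension, and a standard PD-group argument forces the inclusion to be of finite index. Intersecting this common finite-index subgroup with $\phi(\pi_1(C))$ gives the desired subgroup having finite index in both $\phi(\pi_1(C))$ and in the cusp subgroup of $N$. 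The step I expect to be most delicate is Step~1, specifically ruling out a non-cusp fixed point at infinity for the image of the nilpotent kernel; here Property (b) together with the compactness of the cross-section $F$ modulo $\pi_1(F)$ is what forces the fixed point to actually be a cusp point of $\widetilde{N}$.
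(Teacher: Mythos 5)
Your proposal has the same high-level skeleton as the paper's proof (analyze the image of the nilpotent normal subgroup, push forward by normality, close with a cohomological-dimension count), and your Steps 2 and 3 are essentially sound. The gap is concentrated exactly where you predicted, in Step 1, but the specific mechanism you propose there does not work.

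First, the assertion that a nontrivial torsion-free nilpotent subgroup of an arithmetic lattice must be virtually unipotent (``Kolchin plus arithmeticity'') is false. In $\SL(2,\Z[\sqrt{2}])$, viewed as a $\Q$-rank-one lattice, the diagonal subgroup generated by $\mathrm{diag}(1+\sqrt{2},\,(1+\sqrt{2})^{-1})$ is an infinite cyclic (hence nilpotent, torsion-free, discrete) subgroup consisting entirely of semisimple elements. More to the point, the abstract map $\phi$ is not required to respect any Jordan-type decomposition, so even though $\pi_1(F) = \Gamma_{N_P}$ is unipotent in $\pi_1(M)$, there is no a priori reason its $\phi$-image in $\pi_1(N)$ should be. Your fallback argument (no interior fixed point, hence a fixed point at infinity, and polynomial growth forces it to be a cusp point) also does not close the gap: an abelian group of semisimple isometries translating a flat grows polynomially and fixes non-cusp points at infinity, so polynomial growth alone distinguishes nothing.

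What the paper does instead, and what your proposal is missing, is precisely the place where the hypothesis that $M$ is \emph{irreducible} is used. The paper passes not to $\pi_1(F)$ itself but to the abelian characteristic subgroup $A$ given by the last nontrivial term of the lower central series of $\pi_1(F)$, and then uses \cite[Lemma~7.7]{BGS} to split off the semisimple part $\phi(H)$ of $\phi(A)$. If $H$ were nontrivial, the Minset $W$ of $\phi(H)$ would be a proper totally geodesic submanifold on which $\phi(C)$ acts freely, properly discontinuously, and cocompactly (by the dimension count $\cd(C)=n-1$), giving a compact nonpositively curved manifold whose fundamental group therefore has only virtually abelian solvable subgroups. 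This contradicts Property~(c), which says that because $M$ is irreducible (and of $\R$-rank $\geq 2$), $C$ contains a solvable non-virtually-abelian subgroup. That contradiction is what forces $\phi(A)$ to consist of parabolic isometries, and only then does the ``unique cusp point at infinity'' argument apply. Since you never invoke irreducibility or Property~(c), your Step~1 cannot rule out the semisimple case, and the argument does not go through as written.
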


\begin{proof}
Let $P$ be the parabolic corresponding to $C$. Let $\widehat{C}$ be a finite sheeted cover of $C$ as above. We have
\[ 1 \longrightarrow \pi_1(F) \longrightarrow \pi_1(\widehat{C}) \longrightarrow \pi_1(B) \longrightarrow 1,\]
where $F$ and $B$ are as above. We claim that $\pi_1(C)$ has a nontrivial abelian normal subgroup $A$. One can pick $A$ to be the last nontrivial term in the lower central series of $\pi_1(F)$. Since $A$ is characteristic in $\pi_1(F)$ and $\pi_1(F)$ is normal in $\pi_1(\widehat{C})$, it follows that $A$ is normal in $\pi_1(\widehat{C})$. 

Let $H$ be such that $\phi(H)$ is the subgroup of $\phi(A)$ consisting of semisimple isometries. Then by \cite[Lemma 7.7]{BGS}, the group $H$ is normal in $\widehat{C}$, and the Minset $W$ of $\phi(H)$ in $N$ is nonempty and is $\phi(\widehat{C})$-invariant. 

If $H$ is not the trivial group, then $W$ is a totally geodesic submanifold of $N$ of strictly smaller dimension. The action of $\phi(C)$ on $W$ is free and properly discontinuous. Since the cohomological dimension of $C$  is  $(n - 1)$, it follows that the quotient of $W$ by $\phi(C)$ is a compact manifold, which has nonpositive curvature since $W$ is totally geodesic in $N$. Thus, any solvable subgroup of $C$ is virtually abelian (\cite[Theorem 7.16]{Bridson}). But by Fact (c) in the previous section, the group $C$ has a solvable subgroup that is not virtually abelian since $M$ is irreducible. Hence, $H = \{1\}$. Therefore, $\phi(A)$ contains only parabolic isometries. 

Since each parabolic fixes only one cusp point at infinity, and $A$ is abelian, the parabolic isometries in $\phi(A)$ fixes the same cusp point at infinity.  Hence, $\phi(\pi_1(\widehat{C}))$ fixes the same cusp point by normality of $A$ in $C$. So $\phi(\pi_1(\widehat{C}))$ is contained in a cusp subgroup $D$ of $N$. Now, the cohomological dimension $\cd(\pi_1(\widehat{C}))$ is $(n-1)$, which is equal to the dimension of the cross section of a cusp in $N$. It follows that $\phi(\pi_1(\widehat{C}))$ is finite index in $D$.    
\end{proof}

\begin{lemma}\label{projection}
Let $M = K \backslash G/ \Gamma$ be an irreducible, locally symmetric manifold whose universal cover is $X\times Y$ for some symmetric spaces $X$ and $Y$ of noncompact type. Then the projection $p \colon \pi_1(M) \longrightarrow \Isom(X)$ of the group of deck transformation to $\Isom(X)$ is injective. 
\end{lemma}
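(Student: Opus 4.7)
The plan is to show that the kernel $N = \ker p$ is trivial by successively arguing that $N$ is discrete, central in $\Isom(Y)^0$, and then torsion; since $\pi_1(M)$ is torsion free we conclude $N = \{1\}$. The inputs are: (i) irreducibility of $M$ implies that $\Gamma$ is an irreducible lattice in $\Isom(X) \times \Isom(Y)$, in the sense that the projections of $\Gamma$ to each factor have dense image; (ii) $Y$ is of noncompact type, so that $\Isom(Y)^0$ has trivial (or at least finite) center; (iii) $\Gamma \cong \pi_1(M)$ is torsion free.

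First I would observe that every element of $N$ has the form $(e,h)$ with $h \in \Isom(Y)$, so $N$ is naturally a discrete subgroup of $\Isom(Y)$. Since $N$ is normal in $\Gamma$, conjugation by $(g,h)\in \Gamma$ sends $(e,h_0) \in N$ to $(e, hh_0h^{-1})$; in other words, $N$ is normalized by the subgroup $p_Y(\Gamma) \subset \Isom(Y)$, where $p_Y$ is the projection to the second factor.

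Next, by irreducibility of $\Gamma$, the image $p_Y(\Gamma)$ is dense in $\Isom(Y)$. Since normalizing $N$ is a closed condition, the entire group $\Isom(Y)$ normalizes $N$, so $N$ is a discrete normal subgroup of $\Isom(Y)$. The standard fact that a discrete normal subgroup of a connected topological group is central (the continuous map $g \mapsto ghg^{-1}$ from the connected group to the discrete set $N$ must be constant) forces $N$ to lie in the center of $\Isom(Y)^0$.

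Finally, because $Y$ is a symmetric space of noncompact type, $\Isom(Y)^0$ is a semisimple Lie group with trivial center acting faithfully on $Y$, so its center is trivial; thus $N \cap \Isom(Y)^0 = \{1\}$ and $N$ is finite. Since $\pi_1(M)$ is torsion free, $N$ must be trivial, proving injectivity of $p$. The step that deserves the most care is the use of irreducibility to upgrade "normalized by $\Gamma$" to "normalized by $\Isom(Y)$", since this is where the hypothesis that $M$ is irreducible is essential; without it, $\Gamma$ could contain a factor acting only on $Y$ and the projection would fail to be injective.
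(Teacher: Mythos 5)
Your proof is correct but takes a genuinely different route from the paper's. The paper invokes the Margulis normal subgroup theorem: since $G$ has $\R$-rank $\geq 2$, the lattice $\Gamma$ has the normal subgroup property, so $\ker p$ is either finite or of finite index; the finite-index case is excluded because $(X\times Y)/\ker p = X \times (Y/\ker p)$ would have infinite volume, and then torsion-freeness kills the finite case. Your argument instead relies on density of projections of irreducible lattices (a consequence of the Borel density theorem), reduces to the fact that a discrete normal subgroup of a connected group is central, and uses that $\Isom^0(Y)$ is centerless for $Y$ of noncompact type. Your route is technologically lighter, avoiding the full strength of the normal subgroup theorem, and makes the role of irreducibility more transparent; the paper's route is shorter once one grants Margulis's theorem. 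One small point to tighten in your write-up: the closure of $p_Y(\Gamma)$ only contains $\Isom^0(Y)$, not necessarily all of $\Isom(Y)$, so the centrality argument gives $N \cap \Isom^0(Y) = \{1\}$, and then you conclude $N$ is finite by noting that $\Isom(Y)$ has finitely many components; after that, torsion-freeness of $\Gamma$ finishes it, as you say.
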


\begin{proof}
Since $G$ has $\R$-rank $\geq 2$, its lattices have the normal subgroup property. So $p(\Gamma)$ is finite or $\Ker p$ is finite. If $p(\Gamma)$ is finite, then $\Ker p$ is a lattice of $\Q$ rank $1$ since it finitely covers $M$. But this is not a lattice since $(X\times Y)/\Ker p = X \times (Y/\Ker p)$, which has infinite volume. So $\Ker p$ is finite. But since $\Gamma$ is torsion free, $\Ker p = \{1\}$. Hence, $p$ is an injection.
\end{proof}

Let  $G = G_1\times G_2$, where $G_1 = \Isom^0X$ and $G_2 = \Isom^0Y$, where $X$ and $Y$ are as in the above lemma. Since the projection $G \longrightarrow G_1$ is a homomorphism of algebraic groups, by the Jordan-Chevalley theorem (\cite{Procesi}), the projection of a unipotent (semisimle) element is unipotent (semisimple). Therefore, the projection of a strict parabolic (hyperbolic) isometry is a strict parabolic (semisimple) isometry.

\section{Piecewise rank $1$ manifolds: definitions and examples}\label{piecewise Q-rank 1}
Let $Y$ be a complete, finite volume, nonpositively curved, irreducible, locally symmetric manifold. Then $Y = K\backslash G/\Gamma$, where $G$ is a semisimple Lie group $G$ of noncompact type, $K$ is a maximal compact subgroup of $G$, and $\Gamma$ is a torsion free lattice of $G$. The universal cover $\widetilde{Y}$ is $K\backslash G$, which is a nonpositively curved, complete, symmetric space of noncompact type. The dimension of maximal flats in $\widetilde{Y}$ is the dimension of maximal $\R$-split tori of $G$. The dimension of maximal $\Q$-split tori of $G$ is called the $\Q$-rank of $G$. The $\R$-rank ($\Q$-rank) of $Y$ or $\widetilde{Y}$ is defined to be the $\R$-rank ($\Q$-rank) of $G$. 

The manifold $Y$ is negatively curved precisely when the $\R$-rank of $G$ is $1$. If $\widetilde{Y}$ has $\R$-rank $\geq 2$, then by the Margulis arithmeticity theorem (\cite{Zimmer}), the lattice $\Gamma$ is arithmetic. That is, $G$ can be defined over $\Q$ and $\Gamma$ is commensurable to the $\Z$ points $G_\Z$ of $G$. We say that a complete, noncompact, finite volume, locally symmetric manifold $Y$ has \emph{rank $1$} if it has $\R$-rank $1$ or if it has $\R$-rank $\geq 2$ and $\Q$-rank $1$. 

Now let $Y$ be a rank $1$ locally symmetric manifold of noncompact type. Then $Y$ has finitely many ends or \emph{cusps}. Each cusp is diffeomorphic to $[0, \infty) \times S$ for some compact $(n-1)$--dim manifold $S$. The fundamental group of each cusp, or each \emph{cusp subgroup}, corresponds to a maximal subgroup of $\pi_1(Y)$ of parabolic isometries fixing a point on the boundary at infinity of $\widetilde{Y}$. The parametrization $[0,\infty) \times S$ of a cusp can be taken so that each cross section $a\times S$ is the quotient of a horosphere in $\widetilde{Y}$ by the corresponding cusp subgroup. 

If we delete the $(b, \infty) \times S$ part of each cusp $[0, \infty) \times S$ of $Y$, the resulting space is a compact manifold with boundary (that is diffeomorphic to the Borel-Serre compactifition $\overline{Y}$ of $Y$ if $Y$ is arithmetic). We choose $b$ large enough so that the boundary components $b \times S$ of different cusps do not intersect. We require the boundary components of $X$ to lift to a horosphere in $\widetilde{Y}$.

Each boundary component $S$ of $\overline{Y}$ is a locally homogeneous space with respect to a transitive action of a parabolic subgroup $P$ of $G$, which is the stabilize a point in the boundary at infinity $\partial_\infty\widetilde{S}$. A diffeomorphism $f \colon S \longrightarrow S'$ between two boundary components is \emph{p-affine} if there is a Lie group isomorphism $\varphi \colon P \longrightarrow P'$ that induces $f$. That is, the image $\varphi(\pi_1(S)) = \pi_1(S')$.

We say that a manifold $M$ is a \emph{piecewise rank $1$} manifold if it is obtained by taking such manifolds $M_i$ with and glue them along pairs of boundary components via homeomorphisms. Each pair of boundary components that are glued together can belong to the same $M_i$. The set of the spaces $M_i$ with a gluing is called a (the) \textit{decomposition into locally symmetric pieces} of $M$. Each $M_i$ is a \emph{piece} in the cusp decomposition of $M$. Such a manifold $M$ is of \emph{finite type} if there are finitely many pieces in the cusp decomposition of $M$.

A simple example of piecewise rank $1$ manifolds is the double of a $\Q$-rank $1$ locally symmetric space. Another example is the following. If $Y$ is a rank $1$ manifold with at least 2 cusps, let $\overline{Y}$ be its compactification. Suppose that $\overline{Y}$ has two homeomorphic boundary components  $b\times S$ and $b'\times S'$, then we can glue the two boundary components together by a homeomorphism. For example, take $Y$ to be a Hilbert modular manifold. The resulting manifold is a piecewise rank $1$ manifold, and so is any cover of it.

We would also like to describe a few properties of covering spaces of piecewise rank $1$ manifolds and review how their universal covers relate to their Bass-Serre tree. Let $\widehat{M}$ be a covering space of $M$ with covering map $p \colon \widehat{M} \longrightarrow M$. Since $M$ is obtained from gluing $M_i$'s along their boundaries, each component of $p^{-1}(M_i)$ in $\widehat{M}$ is a covering space of $M_i$ for each $i$. Similarly, a component of $p^{-1}(\partial M_i)$ in $\widehat{M}$ is a covering space of $\partial M_i$ for each $i$. Thus, $\widehat{M}$ is the union of connected covering spaces of $Y_i$ glued along their boundaries.

By the above description of covering spaces of $M$, the universal cover $\widetilde{M}$ of $M$ is the union of universal covers of \emph{neutered spaces} $M_i$ (which are complete simply connected locally symmetric negatively curved manifolds with a collection of disjoint horoballs removed) glued to each other along their horosphere boundaries. Each of the neutered spaces is a connected component of $p^{-1} (M_i)$ for some $M_i$ in the cusp decomposition on $M$. The underlying graph in the graph of spaces defined by this decomposition of $\widetilde{M}$ is the Bass-Serre tree of the graph of groups structure of $\pi_1(M)$ defined by the decomposition of $M$ (see the next section for more detailed discussion). Two horospheres that belong to the same neutered space are called \emph{adjacent}. 

The universal cover $\widetilde{M}$ is contractible since it deformation retracts onto the Bass-Serre tree of its fundamental group. This is because for each $i$, the manifold $\widetilde{M_i}$ is contractible and each lift of $\partial M_i$ is contractible since it is a horosphere of a symmetric space of nonpositive curvature.   

\section{The fundamental group of piecewise rank $1$ manifolds}\label{pi_1}

Let $M$ be a piecewise rank $1$ manifold of dimension $n >2$ and let $\{M_i\}_{i \in I}$ be the pieces in the decomposition of $M$ into locally symmetric pieces. The decomposition of $M$ gives $\pi_1(M)$ the structure of the fundamental group of a graph of groups $\mathcal{G}_M$ (see \cite{Serre} and \cite{Scott} for Bass-Serre theory). The vertex groups of $\mathcal{G}_M$ are the fundamental groups of the pieces $M_i$ in the cusp-decomposition of $M$. The edge groups of $\mathcal{G}_M$ are the fundamental groups of the cusps of each piece, i.e. boundary components of each $M_i$. The injective homomorphism from an edge group to a vertex group is the inclusion of the corresponding boundary component into $M_i$.  

\begin{figure}
\begin{center}
\includegraphics[height=100mm]{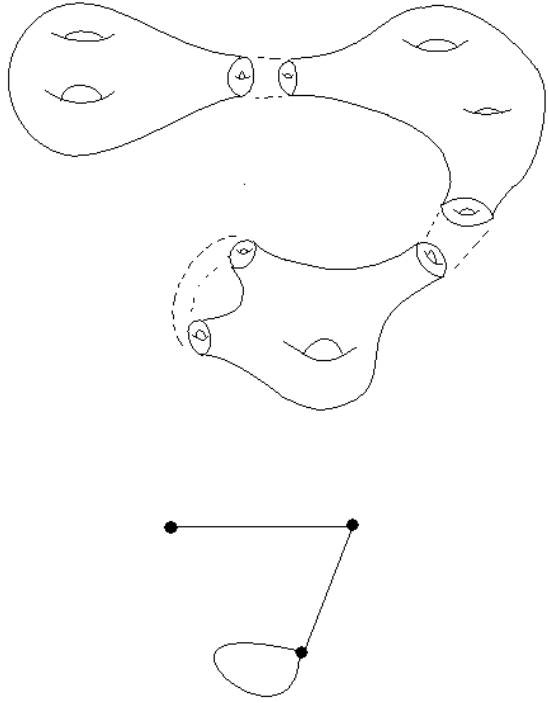}
\caption{A piecewise rank $1$ manifold and the corresponding underlying graph of the graph of groups structure of the fundamental group.}
\end{center}
\end{figure}

\begin{lemma}\label{cusp subgroups}
Let $M$ be a piecewise rank $1$ manifold and let $$\phi \colon \pi_1(M) \longrightarrow \pi_1(M)$$ be an automorphism. If $C$ is an edge group of $\pi_1(M)$, then a finite index subgroup of $\phi(C)$ is contained in a conjugate of an edge group $\pi_1(M)$.
\end{lemma}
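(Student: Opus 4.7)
The plan is to analyze the action of $\pi_1(M)$ on the Bass--Serre tree $T$ of the graph of groups $\mathcal{G}_M$ (vertex stabilizers are conjugates of the $\pi_1(M_i)$; edge stabilizers are conjugates of the edge/cusp subgroups). The goal is to show that a finite-index subgroup of $\phi(C)$ fixes a vertex of $T$; combined with Lemma~\ref{cusp embedding} (applied after conjugating the resulting injection of a finite-index subgroup of $C$ into a vertex group) this immediately yields the lemma, with the finite-index subgroup of $\phi(C)$ sitting inside a conjugate of some edge subgroup.

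The key object is $A := \phi(\Gamma_{N_P})$, where $\Gamma_{N_P}\lhd C$ is the nontrivial normal nilpotent subgroup from Property (a). Then $A$ is a finitely generated infinite nilpotent subgroup of $\pi_1(M)$, normal in $\phi(C)$. First I would show that $A$ fixes a vertex of $T$. Since $A$ is finitely generated nilpotent, a standard induction on nilpotency class (using the nontriviality of the center of a nilpotent group together with the Bruhat--Tits fixed-point theorem for bounded orbits) shows that $A$ either fixes a vertex or stabilizes a unique invariant axis $L$ and acts on it by translations. In the axis case, normality of $A$ in $\phi(C)$ forces $\phi(C)$ to preserve $L$, yielding a homomorphism $\phi(C) \to \Isom(L)\cong\Z\rtimes\Z/2$; the kernel $K$ fixes $L$ pointwise, so $K$ lies in the intersection of all edge stabilizers along $L$, which by Property (b) contains no nontrivial unipotent element of any cusp subgroup of any vertex group along $L$.

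From here I would derive a contradiction that rules out the axis case, splitting according to whether $\Gamma_{N_P}$ is non-abelian or abelian. When $\Gamma_{N_P}$ is non-abelian, Property (c) provides a solvable non-virtually-abelian subgroup inside $C$; its image would lie inside $K$ (because $A$ does), but the projection of $K$ to any vertex group along $L$ is semisimple by Property (b), hence virtually abelian --- contradicting the solvable non-virtually-abelian structure. When $\Gamma_{N_P}$ is abelian (e.g.\ the Hilbert modular case), a cohomological-dimension argument applies: $\phi(C)$ has cohomological dimension equal to the dimension of the boundary component $\partial M_i$ minus one, while the normalizer in $\pi_1(M)$ of the characteristic flat of any ``semisimple abelian'' subgroup of a vertex group has cohomological dimension bounded by the $\R$-rank of the vertex piece, too small in general. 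This forces $A$ to fix a vertex of $T$; $\Fix(A)$ is then a nonempty $\phi(C)$-invariant subtree, and running the same tree-action dichotomy for $\phi(C)$ on $\Fix(A)$ (using Property (c) once more to rule out a nontrivial axis action by $\phi(C)$ itself) shows that a finite-index subgroup of $\phi(C)$ fixes a vertex, completing the proof.

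The main obstacle I foresee is the case analysis required to rule out the axis action: the two subcases (non-abelian and abelian $\Gamma_{N_P}$) use rather different tools, and uniformly converting Property (b) --- a geometric statement about horospheres in a single piece --- into an algebraic control on intersections of conjugate edge subgroups along an entire axis of $T$ (which may traverse many distinct pieces) is where the delicate bookkeeping lives.
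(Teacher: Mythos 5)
Your strategy --- put $A = \phi(\Gamma_{N_P})$ on the Bass--Serre tree, invoke Serre's dichotomy for finitely generated nilpotent groups (Theorem~\ref{nilpotent action}), and finish with Properties (b), (c), cohomological dimension, and Lemma~\ref{cusp embedding} --- is the paper's strategy, but the execution has gaps in each branch. The most serious one is in the fixed-vertex case: after concluding $A$ fixes a vertex, you propose to ``run the same tree-action dichotomy for $\phi(C)$ on $\Fix(A)$,'' but $\phi(C)$ is not nilpotent, so Theorem~\ref{nilpotent action} says nothing about it and there is no dichotomy to run. The paper instead splits on whether $\Fix(A)$ has bounded or unbounded diameter. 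In the bounded case the center of mass does the job (you note this). In the unbounded case the paper's argument is delicate: it sets $N = \phi^{-1}(F')$ where $F'$ is the unipotent radical of the cusp group $C'$ containing $A$, applies Property~(b) to $F$ (which normalizes $N$) to bound $\Fix(N)$ by a single edge, and then shows $\phi^{-1}(C')$, and hence $\phi(C)$ after invoking Lemma~\ref{cusp embedding}, is virtually a cusp subgroup. Your proposal has no handle on this unbounded subcase.

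There are also errors in the axis case. With non-abelian $\Gamma_{N_P}$ you assert $A \leq K$, but $A$ is by hypothesis acting nontrivially by translations on the axis $L$, so $A$ does \emph{not} lie in $K = \ker(\phi(C) \to \Isom(L))$ --- only the translation-kernel $A_0 = A \cap K$ does --- and the solvable non-virtually-abelian subgroup supplied by Property~(c) contains all of $\Gamma_{N_P}$, which maps partly outside $K$, so the contradiction you sketch does not close. With abelian $\Gamma_{N_P}$ your dimension count is wrong: $\cd(\phi(C)) = n - 1 = \dim(\partial M_i)$, not $\dim(\partial M_i) - 1 = n - 2$, and the proposed ``$\R$-rank'' upper bound does not correspond to anything in the paper's argument. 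What the paper actually does in its Case~3 is compare $\cd(C)$ with the chain $\cd(\phi(H_0)) + 1 \leq \dim(B') + 1 \leq n - 1$ (where $H_0$ is the translation kernel and $B'$ is the boundary factor of a cusp group along $L$, the injection $\phi(H_0)\hookrightarrow B'$ coming from Property~(b)), forcing $F' \cong \Z$; it then pulls this back via $\phi^{-1}$ to get $F\cong\Z$, and derives a splitting $\widehat{H}\cong\Z\oplus\widehat{H}_1$ contradicting Property~(c). You also omit the paper's opening reduction via property~(T)/(FA), which disposes immediately of all pieces without a real or complex hyperbolic factor and keeps the subsequent tree analysis confined to the remaining cases.
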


Before proving Lemma \ref{cusp subgroups}, we state a result in Bass-Serre theory (\cite[Section 6.5, Proposition 27]{Serre}) which we will use in the proof.

\begin{theorem}[\cite{Serre}]\label{nilpotent action}
Let $G$ be a finitely generated nilpotent group acting on a tree $T$ without inversion. Then either $G$ has a fixed point or there is an axis $L$ that is preserved by $G$, on which $G$ acts by translation by means of a nontrivial homomorphism $G \longrightarrow \Z$.
\end{theorem}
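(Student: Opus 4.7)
The plan is to analyze the action of $\phi$ applied to the unipotent part of $C$ on the Bass--Serre tree $T$ of $\mathcal{G}_M$, rule out the hyperbolic alternative of Theorem~\ref{nilpotent action}, and then reduce the elliptic case to Lemma~\ref{cusp embedding}.

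First, I would replace $C$ by the finite-index subgroup $\widehat{C}$ from Property~(a), so that $1 \to N \to \widehat{C} \to \Lambda \to 1$, where $N = \Gamma_{N_P}$ is a nontrivial (by Godement compactness) finitely generated nilpotent subgroup that is characteristic in $\widehat{C}$, and $\Lambda$ is a torsion-free lattice in $M_P$. Producing a finite-index subgroup of $\phi(\widehat{C})$ sitting inside a conjugate of an edge group will suffice, since $[\phi(C):\phi(\widehat{C})]<\infty$.

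Next, apply Theorem~\ref{nilpotent action} to $\phi(N)$ acting on $T$: either (i) $\phi(N)$ fixes a vertex of $T$, or (ii) $\phi(N)$ preserves a (unique) axis $L$, acting on it by a nontrivial translation homomorphism $\phi(N) \to \Z$. Ruling out (ii) is the main step. Normality of $N$ in $\widehat{C}$ forces $\phi(\widehat{C})$ to normalize $\phi(N)$ and hence to preserve the unique axis $L$, yielding a homomorphism $\rho \colon \widehat{C} \to \Isom(L)$ whose restriction to $N$ is nontrivial; after passing to an index-$\leq 2$ subgroup one obtains a nontrivial homomorphism $\psi \colon N \to \Z$ that extends to $\widehat{C}$, and so is invariant under the conjugation action of $\Lambda$ on $N$. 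The linearization of this action on $N^{\mathrm{ab}}\otimes\R$ is, by Borel density for the lattice $\Lambda \subset M_P$, the restriction of the $M_P$-representation on $\mathfrak{n}_P^{\mathrm{ab}}$. A nonzero $M_P$-fixed vector there would, via the root-space analysis used in the proof of Property~(e) and the $2$-step nilpotency from Property~(f), commute with a Borel subgroup of $G$, which is impossible. Hence no such $\psi$ exists. (If $\Lambda$ is trivial we are in the $\R$-rank~$1$ regime and appeal to \cite{Tamrigidity}.)

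Finally, in case~(i) the fix-set $F = \Fix(\phi(N))$ is a nonempty $\phi(\widehat{C})$-invariant subtree of $T$. I would argue that $F$ is bounded: if $F$ contained a ray then $\phi(\widehat{C})$ would (after a finite-index restriction) fix an end of $T$ and so admit a nontrivial Busemann homomorphism to $\R$ nontrivial on $\phi(N)$, and the same $\Lambda$-invariance obstruction as above rules this out. Consequently a finite-index subgroup of $\phi(\widehat{C})$ fixes a vertex $v \in F$, so it is contained in a conjugate of some vertex group $\pi_1(M_i)$. Lemma~\ref{cusp embedding}, applied to the resulting injection into $\pi_1(M_i)$, then places a finite-index subgroup of $\phi(C)$ inside a conjugate of a cusp subgroup of $M_i$, which is the desired edge group of $\mathcal{G}_M$. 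The hardest step is ruling out case~(ii): it requires genuine higher-rank input about the absence of $\Lambda$-fixed vectors in the conjugation representation on $N^{\mathrm{ab}}$.
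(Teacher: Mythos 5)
Your proposal does not address the statement it is supposed to prove. Theorem \ref{nilpotent action} is a self-contained, purely group-theoretic fact about an \emph{arbitrary} finitely generated nilpotent group $G$ acting on an \emph{arbitrary} tree $T$ without inversion; the paper does not reprove it but quotes it from Serre (\emph{Trees}, Section 6.5, Proposition 27). What you have written is instead a proof sketch for Lemma \ref{cusp subgroups}: you work with the specific Bass--Serre tree of $\mathcal{G}_M$, with the automorphism $\phi$, with the parabolic structure $1 \to N \to \widehat{C} \to \Lambda \to 1$ of a cusp subgroup, and you conclude by invoking Lemma \ref{cusp embedding}. Most tellingly, your argument explicitly \emph{applies} Theorem \ref{nilpotent action} to $\phi(N)$ as its first dichotomy, so as a proof of that theorem it is circular; and all of the higher-rank input (Borel density, root spaces, Property (e)) is irrelevant to a statement in which no Lie group appears.

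A correct proof must use only the tree action and nilpotency. The two ingredients are: (i) Serre's lemma that a finitely generated group acting without inversion on a tree, all of whose elements are elliptic, has a global fixed point (two elliptic elements with disjoint fixed-point sets have hyperbolic product); so if $G$ has no fixed point it contains a hyperbolic element $h$ with axis $L$. (ii) Nilpotency forces $L$ to be $G$-invariant: if some $g \in G$ moved $L$ to an axis meeting it in a bounded (or empty) segment, the iterated commutators $[g,h], [g,[g,h]], \dots$ would remain hyperbolic with nonzero translation lengths, contradicting that the lower central series terminates; the same commutator computation rules out orientation-reversing elements of the stabilizer of $L$. Hence $G$ preserves $L$ and acts on it through translations, giving the nontrivial homomorphism $G \to \Z$. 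None of this appears in your write-up, so the statement remains unproved.
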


\begin{proof}[Proof of Lemma \ref{cusp subgroups}]
If $C$ is the cusp subgroup of a piece $M_i$ whose universal cover does not contain a copy of real or complex hyperbolic space, then we are done. This is because $\pi_1(M_i)$, which is a vertex group that contains $C$, has property (T) and thus it has property (FA). Hence, the action of $\phi(\pi_1(M_i))$ on the Bass Serre tree of $M$ has a global fixed point. Hence, $\phi(\pi_1(M_i))$ and thus $\phi(C)$ is contained in a conjugate of a vertex group of $\pi_1(M)$. Then by Lemma \ref{cusp embedding}, a finite index subgroup of $\phi(C)$ is a finite index subgroup of a cusp group of $M$.

Suppose that the two pieces that the cusp corresponding to $C$ belong to has reducible universal covers that have a real or complex hyperbolic space factor. Call one of them $X\times Y$ for $X$ real or complex hyperbolic. We consider the action of the group $\phi(C)$ on the Bass-Serre tree $T$ of $\pi_1(M)$. By property (a) in Section \ref{general structure}, we can assume (with abused notation) that $\pi_1(C)$ has the following structure 
\[ 1 \longrightarrow F \longrightarrow C \longrightarrow B \longrightarrow 1.\]
Note the $F$ is maximal unipotent of the vertex group that it is in. By the Godement compactness criterion (Theorem $5.30$ in \cite{Witte}), the group $F$ is nontrivial.

\textbf{Case 1}: The set $\Fix(\phi(F))$ is a bounded-diameter set $S$. Since $F$ is normal in $C$, it follows that $\phi(C)$ preserves the center of mass of $S$, i.e., $\phi(C)$ has a global fixed point. Hence, $C$ is contained in a conjugate of cusp subgroup by Lemma \ref{cusp embedding} and that $M$ is irreducible.

\textbf{Case 2}: The set $\Fix(\phi(F)$ has unbounded diameter, i.e. it contains arbitrarily long paths. Then $\phi(F) \leq C'$ for some cusp group $C'$. Let $N = \phi^{-1}(F')$. Then $F$ normalizes $N$ and thus, preserves the Minset of $N$. Since $F$ contains only parabolic elements as they are unipotent, it does not preserve any long path in $T$ (i.e. paths that contains at least two edges). Hence, the action of $N$ cannot factor through a translation along some (unique) axis. So $\Fix(N)$ is nonempty and can be either a vertex or an edge. 


 
So $N$ preserves at most one horosphere. Since $\phi^{-1}(C')$ normalizes $N$, it follows that $\phi^{-1}(C')$ preserves the $\Fix(N)$, which is at most one edge in $T$. The action on $T$ is without inversion, so $\phi^{-1}(C')$ fixes the Fixset of $N$ pointwise. Hence, $\phi^{-1}(C')$ is contained in a cusp subgroup of $M$. By Lemma \ref{cusp embedding}, $\phi^{-1}(C')$ is virtually contained in the cusp subgroup that contains $F$, which is $C$. Thus, $\phi(C)$ is virtually contained in $C'$. 

\textbf{Case 3}: The action of $\phi(F)$ on $T$ factors through a translation along some axis $l'$ of $T$. Since $F$ is normal in $C$, the action of $\phi(C)$ on $T$ leaves $l'$ invariant. Hence, we get a homomorphism
\[ \alpha \colon C \longrightarrow \Aut(L) \cong \Z\rtimes\Z_2.\] 
Hence, $C$ has a subgroup $H$ of index at most $2$ such that the action of $\phi(H)$ on $l'$ is via orientation preserving automorphisms of $L$ only and that $\phi(H)$ surjects onto a nontrivial group of translations of $L$. Let $H_0 = \Ker\alpha$. Then we have
\[ 1\longrightarrow H_0 \longrightarrow H \longrightarrow \Z \longrightarrow 1.\]

The elements of $\phi(H_0)$ are deck transformations that fixes $l'$ pointwise. Therefore, there exists an edge $e'$ in $T$ whose corresponding group $C'$ contains $\phi(H_0)$. Since $\Z$ is free, the group $H$ is isomorphic to $\Z\ltimes H_0$. Now, $C'$ has the structure
\[ 1 \longrightarrow F' \longrightarrow C' \longrightarrow B' \longrightarrow 1.\]
Let $G_{v'}$ be a vertex group containing $C$. Since $\phi(H_0)$ fixes two horospheres in $G_{v'}$, the projection homomorphism $C' \longrightarrow B'$, when restricted to $\phi(H_0)$, is injective. This means that $\phi(H_0)$ is isomorphic to a subgroup of $B'$. This implies that
\[ \cd(C) = \cd (H) \leq \cd(\phi(H_0)) + 1 \leq \dim(B') + 1 \leq \dim (C') - 1 + 1 =  n - 1 = \cd(C),\]
where $\cd$ denotes the cohomological dimension of a group. The first equality holds because $H$ is a finite index subgroup of $C$. The second inequality holds because $H$ is an extension of $H_0$ by $\Z$ and $\cd(H_0) \leq \dim(B')$, which follows from the fact that $\phi(H_0)$ embeds as a subgroup of $B'$. The third inequality holds because $\cd(F')$ is at least $1$ since $F'$ is a finitely generated, nontrivial, torsion free, nilpotent group. 

So we must have equality in all of the above inequalities, which implies that $\cd(F') = 1$, which implies that $F'\cong\Z$, and $\phi(H_0)$ is finite index in $B'$. 

Now consider $\phi^{-1}$. If $\Fix(\phi^{-1}(F'))$ is nonempty, then the above case $1$ and case $2$ apply and we are done. Suppose that $\phi^{-1}(F')$ is translation along some axis $\l$ in $T$. The same argument applies to give $F \cong \Z$. Now, 

\[ 1 \longrightarrow F_1 \longrightarrow H\longrightarrow H_1 \longrightarrow 1,\]
for some finite index subgroup $H_1$ of $B$ and some finite index subgroup $F_1$ of $F \cong \Z$. So $\phi(F_1)$ is a group of translation along $l'$. There is a finite index subgroup $\widehat{H}$ of $H$ such that $\phi(\widehat{H})$ factors through the same group of translations of $\phi(F_1)$. Then 
\[ 1 \longrightarrow F_1 \longrightarrow \widehat{H} \longrightarrow \widehat{H}_1 \longrightarrow 1,\]
for some finite index $\widehat{H}_1$ of $H_1$. There is a homomorphism from $\widehat{H}$ to $F_1$ that commutes with the inclusion $F_1 \longrightarrow \widehat{H}$, namely by factoring through translation along $l'$ and identifying that with $\phi(F_1)$. It follows that $\widehat{H}\cong \Z\bigoplus \widehat{H}_1$. Since $\widehat{H}_1$ is a finite index subgroup of $B$, it does not have a solvable nonabelian subgroup and thus, neither does $H$. This is a contradiction.

\end{proof}

Now we prove Theorem \ref{piece rigidity}. Since each piece $M_i$ of $M$ is aspherical, we only need to show the statement on the level of fundamental group. That is, it suffices to prove that following lemma.

\begin{lemma}\label{vertex subgroup}
Let $M$ be a piecewise rank $1$ manifold of dimension $n > 2$ and let $\phi \colon \pi_1(M) \longrightarrow \pi_1(M)$ be an isomorphism. If $G_v$ is a vertex group in the graph of groups decomposition of $\pi_1(M)$,  then $\phi(G_v)$ is conjugate to a vertex group of $\pi_1(M)$.
\end{lemma}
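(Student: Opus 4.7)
The plan is to show that $\phi(G_v)$ fixes a vertex in the Bass--Serre tree $T$ of $\pi_1(M)$, and then upgrade this containment to equality with a conjugate vertex group. The second step is straightforward: if $\phi(G_v)\subseteq G_w$ for some conjugate of a vertex group, then by Lemma \ref{cusp subgroups} and Lemma \ref{cusp embedding} the image of each cusp subgroup of $G_v$ sits (virtually) as a cusp subgroup of $G_w$; a cohomological dimension count (both $G_v$ and $G_w$ have $\cd = n-1$) combined with the symmetric argument applied to $\phi^{-1}$ forces $\phi(G_v) = G_w$.

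The whole content of the lemma is therefore to produce the fixed vertex. I would split into cases according to the structure of the ambient Lie group $G_i$ in which $G_v = \pi_1(M_i)$ is an irreducible lattice:

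\textbf{Case (i): no simple factor of $G_i$ is locally isomorphic to $\SO(n,1)$ or $\SU(n,1)$.} Then $G_v$ has Kazhdan's property (T) and hence Serre's property (FA), so $\phi(G_v)$ fixes a vertex of $T$ immediately. This is essentially the argument already used in the opening of the proof of Lemma \ref{cusp subgroups}.

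\textbf{Case (ii): $M_i$ is itself a rank-$1$ (real or complex hyperbolic) locally symmetric manifold.} Then $G_v$ need not have (FA), but this is the cusp-decomposable case, and the fixed-vertex conclusion follows from the corresponding argument in \cite{Tamrigidity}.

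\textbf{Case (iii): $G_i$ has $\R$-rank $\geq 2$ but has a simple factor locally isomorphic to $\SO(n,1)$ or $\SU(n,1)$ (e.g.\ Hilbert modular type).} Here I would consider the minimal $\phi(G_v)$-invariant subtree $T'\subseteq T$ and apply the Margulis normal subgroup theorem to the kernel $K$ of $\phi(G_v) \to \Aut(T')$: since $K\trianglelefteq G_v$, either $K$ is finite (hence trivial, as $G_v$ is torsion free) or $K$ has finite index. If $K$ has finite index, $K$ fixes $T'$ pointwise, hence sits in a single vertex group, and we are done. If $K=\{1\}$, fix any cusp subgroup $C$ of $G_v$. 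By Lemma \ref{cusp subgroups} a finite-index subgroup of $\phi(C)$ fixes an edge of $T$, so $\phi(C)$ has bounded orbits on $T'$; combining this with the reducible horospherical structure $\widetilde{M}_i = X\times Y$ of the piece (with $X$ rank $1$) and the injectivity of the projection $\pi_1(M_i)\to\Isom(X)$ provided by Lemma \ref{projection}, I would rule out the existence of hyperbolic or dihedral actions, forcing $T'$ to be a single point.

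The main obstacle is case (iii). The previous paper \cite{Tamrigidity} could afford to work entirely inside one piece because the pieces were negatively curved and the cusps were virtually nilpotent, but in the Hilbert modular regime one has neither property (T) nor negative curvature, and cusps have non-trivial semisimple ``base'' part $B$. The delicate point in case (iii) is leveraging the Margulis normal subgroup theorem together with Lemma \ref{cusp subgroups} to kill the faithful non-elliptic alternative; once that is done, the cohomological dimension argument closes the gap between ``contained in'' and ``equal to'' a vertex group, finishing Theorem \ref{piece rigidity}.
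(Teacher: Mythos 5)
Your proposal takes a genuinely different route from the paper. The paper's proof of Lemma~\ref{vertex subgroup} is a coarse-geometric argument: lift $\phi$ to a $\phi$-equivariant quasi-isometry $\widetilde f$ of $\widetilde M$, use Lemma~\ref{cusp subgroups} to deduce that $\widetilde f$ sends each horosphere to within bounded distance of a horosphere, and then show (by a path-separation argument, using that quasi-isometries distort distance only boundedly) that adjacent horospheres go to adjacent quasi-horospheres; from there the conclusion that $\phi$ sends vertex stabilizers to conjugates of vertex stabilizers is read off from the geometry. You instead work on the Bass--Serre tree directly, aiming to show $\phi(G_v)$ fixes a vertex of $T$, splitting into three cases by the Lie type of the piece. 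The advantage of the paper's argument is that once Lemma~\ref{cusp subgroups} is in hand, a single quasi-isometry argument handles all three Lie types uniformly, sidestepping the need to establish that the \emph{whole} vertex group has a fixed point on $T$ (which, as you observe, is exactly the delicate point when property~(T) is absent).

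There is a real gap in your Case~(iii). First, a small slip: when $K=\ker(\phi(G_v)\to\Aut(T'))$ has finite index, it is not enough to note that $K$ ``sits in a single vertex group''; you still need $\phi(G_v)$ itself to fix a vertex. The fix is standard --- $\phi(G_v)/K$ is a finite group acting on $T'$, hence fixes a point, and $K$ fixes $T'$ pointwise, so $\phi(G_v)$ fixes a vertex --- but as written the step is incomplete. More seriously, when $K=\{1\}$ the argument is entirely asserted: you say you ``would rule out the existence of hyperbolic or dihedral actions,'' but no mechanism is given. Knowing from Lemma~\ref{cusp subgroups} that $\phi(C)$ has bounded orbits for one cusp subgroup $C$ does not by itself prevent $\phi(G_v)$ from containing hyperbolic elements, and the appeal to Lemma~\ref{projection} (injectivity of the projection to a rank-one factor) is not connected to the tree action in any concrete way. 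This is exactly the case the paper's quasi-isometry argument absorbs for free, so your write-up stops precisely where the hardest work begins.

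A smaller point: the ``cohomological dimension count'' in your upgrade step is something of a red herring, since vertex groups and edge groups both have $\cd=n-1$ here, so $\cd$ alone cannot distinguish containment from equality. What actually closes the gap is the argument you mention in the same breath: apply the claim to $\phi^{-1}$ to get $G_v\subseteq G_{v'}$, and if $v\neq v'$ then $G_v$ fixes the path from $v$ to $v'$, hence lies in an edge group, which has infinite index in $G_v$ --- a contradiction. State that directly rather than invoking $\cd$.
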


\begin{proof}
Fix a basepoint $x_0 \in M$, and pick a lift $\widetilde{x}_0 \in \widetilde{M}$. Without loss of generality, we can assume that $\widetilde{f}(\widetilde{x}_0) = \widetilde{x}_0$. Now, each horosphere in $M$ is the universal of a horoboundary component (which is a compact $(n-1)$-dim manifold) of a piece in the cusp decomposition of $M$. Since $\phi$ maps cusp subgroups to conjugates of cusp subgroups and since $\widetilde{f}$ is a $\phi$--equivariant quasi-isometry, the image of a horosphere under  $\widetilde{f}$ is a bounded distance from a horosphere (or a \emph{quasi-horosphere}), i.e. it is contained in a $d$--neighborhood of a horosphere. Also, since $\phi$ has an inverse, every horosphere in $N$ is the image of a quasi-horosphere in $M$.

We want to prove that any two adjacent horospheres are mapped to two adjacent horospheres. Let $H_a$, $H_b$ be adjacent horospheres. Suppose that $\widetilde{f}(H_a)$ and $\widetilde{f}(H_b)$ are not adjacent quasi-horospheres, i.e. any path starting at a point in $\widetilde{f}(H_a)$ and ending at some point in $\widetilde{f}(H_b)$ crosses a quasi-horosphere $f(H_c)$ for some horosphere $H_c$. 

Let $p \in H_a$ and $q \in H_b$ be such that the distance from $p$ to $H_c$ and the distance from $q$ to $H_c$ is some large enough $K$. Let $\gamma$ be a path connecting $p$ to $q$ such that the distance between $\gamma$ and $H_c$ is at least the minimum of that from $p$ and $q$ to $H_c$. Since $f$ is a quasi-isometry, the distance between $f(\gamma)$ and $f(H_c)$ is positive if $K$ is large enough, i.e. $f(\gamma)$ is a path connecting $f(H_a)$ and $f(H_b)$ that does not cross $f(H_c)$. But this is a contradiction to the assumption that $f(H_a)$ and $f(H_b)$ are separated by $f(H_c)$. So $f$ does not map adjacent horospheres to non-adjacent horospheres. 

Since $f$ has an quasi-isometric inverse, $f$ maps non-adjacent quasi-horospheres to non-adjacent quasi-horospheres. This implies that $f_*$ maps each vertex group to a conjugate of a vertex group, which is what we want to prove.      
\end{proof}
\section{$\Out(\pi_1(M))$: twists and turns}\label{twists and turns}
Let $\mathcal{M}$  be the disjoint union of all the complete locally symmetric spaces corresponding to the pieces $M_i$ in the cusp decomposition of $M$, i.e. they are the spaces before we delete their cusps.
By Theorem~\ref{vertex subgroup}, an element of $\Out(\pi_1(M))$ has to descend to an isomorphism between vertex groups up to conjugation, which, by the Mostow-Prasad Rigidity Theorem, is induced by an isometry with respect to the complete, locally symmetric metric on each of the pieces. The descending map is a homomorphism from $\Out(\pi_1(M))$ to $\Isom(\mathcal{M})$. We call this induced map 
\[\eta \colon \Out(\pi_1(M))\longrightarrow \Isom(\mathcal{M}).\]
Let $\mathcal{A}(M) $ be the image of $\Out(\pi_1(M))$ under $\eta$. We call the elements of $\mathcal{A}(M)$ \emph{turns}. Then $\mathcal{A}(M)$ is the subgroup of $\Isom(\mathcal{M})$ of isometries of $\mathcal{M}$ whose restriction to the boundaries of each pair of cusps that are identified in the cusp decomposition of $M$ are homotopic with respect to the gluing.

If $M$ is finite-volume, complete, locally symmetric of $\Q$ rank $1$, then by the Mostow-Prasad Rigidity Theorem, $\Out(\pi_1(M)) \cong \Isom(M, g_{loc})$, where $g_{loc}$ is the locally symmetric metric on $M$. This implies that $\Out(\pi_1(M))$ is finite since $\Isom(M)$ is finite. One might expect that if $M$ is piecewise rank $1$, given the above theorem, $\Out(\pi_1(M))$ will also be finite. However, this is not true.  

For example, let $M$ be the double of a locally symmetric manifold of $\Q$ rank $1$ with one cusp. Whether or not such manifolds exist is unknown to the author. However, for the purpose of this discussion, this is not a concern because the idea carries over to the general case of piecewise rank $1$ manifolds, it will only involve unnecessary complication in writing concrete automorphisms. Let $M_1$ and $M_2$ be the two pieces in the cusp decomposition of $M$ and let $G_i = \pi_1(M_i)$. Then $\pi_1(M) = G_1*_CG_2$. Pick an element $c_0 \ne 1$ in the center of $C$. Let $\phi \colon \pi_1(M) \longrightarrow \pi_1(M)$ be induced by 
\[\phi(g) = 
\begin{cases}
&  g \; \qquad \qquad \text{if} \; g \in G_1 \\
& c_0gc_0^{-1} \qquad \text{if} \; g \in G_2.
\end{cases}\]
By the universal property of amalgamated products, $\phi$ extends to an automorphism of $G_1*_CG_2$ since $\phi$ is an automorphism when restricted to $G_1$ and $G_2$ and agrees on $C = G_1 \cap G_2$. 

\begin{lemma}\label{twist}
Let $M$ and $\phi$ be as above. Then $\phi$ is an infinite order element of $\Out(\pi_1(M))$.
\end{lemma}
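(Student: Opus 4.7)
The plan is to show directly that $\phi^n$ is not an inner automorphism of $\pi_1(M)=G_1\ast_C G_2$ for any $n\geq 1$. Suppose toward a contradiction that $\phi^n=\Conj(h)$ for some $h\in\pi_1(M)$. Since $\phi^n$ restricts to the identity on $G_1$, the element $h$ centralizes (and in particular normalizes) $G_1$. Consider the action on the Bass--Serre tree $T$ of the splitting $\pi_1(M)=G_1\ast_C G_2$, and let $v_1$ be the unique vertex stabilized by $G_1$. Since $hG_1h^{-1}=G_1$, the vertex $hv_1$ also has stabilizer $G_1$. If $hv_1\neq v_1$, then $G_1$ would fix every edge of the geodesic from $v_1$ to $hv_1$, hence $G_1$ would lie inside an edge stabilizer, i.e.\ a conjugate of $C$. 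Since $C$ has infinite index in $G_1$, this is impossible. Therefore $hv_1=v_1$, so $h\in G_1$, and combining with centralization gives $h\in Z(G_1)$.

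Next, $G_1$ is a torsion-free irreducible lattice in a semisimple Lie group $\mathbf{G}$ of noncompact type. By Borel density, $G_1$ is Zariski dense in $\mathbf{G}$, so $Z(G_1)\subset Z(\mathbf{G})$. Since $Z(\mathbf{G})$ is finite and $G_1$ is torsion-free, $Z(G_1)=\{1\}$, forcing $h=1$ and hence $\phi^n=\id$ on all of $\pi_1(M)$. But this contradicts the action of $\phi^n$ on $G_2$: the same argument shows $Z(G_2)=\{1\}$, so the nontrivial element $c_0^n$ (which is of infinite order because $C$ is torsion-free) is not central in $G_2$, and one can choose $g\in G_2$ with $\phi^n(g)=c_0^n g c_0^{-n}\neq g$.

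The only step requiring care is the Bass--Serre identification $h\in G_1$, which rests entirely on the infinite index of $C$ in $G_1$; the rest of the argument is structural. I do not expect a genuine obstacle here. The same strategy adapts to the general piecewise rank $1$ setting by replacing $T$ with the Bass--Serre tree of the graph-of-groups decomposition of $\pi_1(M)$ and applying the identical Borel-density conclusion to each vertex group.
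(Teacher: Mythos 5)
Your proof is correct and takes essentially the same route as the paper. Both arguments reduce to showing that the conjugating element $h$ (the paper's $g$) lies in $G_1$ via the Bass--Serre tree of the amalgam: the paper observes that the fixed-point set of $G_1$ is a single vertex $v_1$ (since $G_1$ is not contained in a conjugate of $C$) and that $h$, centralizing $G_1$, must preserve that set; you instead note that $v_1$ and $hv_1$ both have stabilizer $G_1$ and would force $G_1$ into an edge group if distinct. These are the same observation phrased differently. You then go slightly further than the paper by supplying the Borel-density justification for $Z(G_1)=\{1\}$ and by spelling out why $\phi^n$ acts nontrivially on $G_2$ (using $Z(G_2)=\{1\}$ and $c_0^n\neq 1$), both of which the paper leaves implicit, so your write-up is a modest tightening of the same proof.
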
 
\begin{proof}

For all $k \in \N$, $\phi^k$ is the identity on $G_1$ and conjugation by $c_0^k$ on $G_2$. Suppose $\phi^k$ is an inner automorphism of $G_1*_CG_2$ for some $k \in \N$. Then there exists $g \in G_1*_CG_2$ such that $\Conj(g)\circ\phi$ is the identity on $G_1*_CG_2$. This implies that for $g_1 \in G_1$, we have $g\phi^k(g_1)g^{-1} = g_1$. So $gg_1g^{-1} = g_1$ since $\phi^k$ is the identity on $G_1$. Thus, $g$ is in the centralizer of $G_1$ in $G_1*_CG_2$. 

We claim that $g$ is in the centralizer of $G_1$. Consider the action of $G_1*_CG_2$ on the Bass Serre tree $T$ of $G_1*_CG_2$. The fixed set of $G_1$ is a vertex $v$. Since $g$ commutes with every element of $G_1$, the fixed set of $G_1$ must be preserved by $g$. Hence, $g$ belongs to the stabilizer of $v$, which is $G_1$. Therefore, $g$ is in the centralizer of $G_1$ and thus $g =1$. This implies that $\phi$ is the identity homomorphism, which is a contradiction since $\phi$ acts nontrivially on $G_2$. Hence, $\phi$ is represents an infinite order element in $\Out(\pi_1(M))$.

\end{proof}

The automorphism $\phi$ above is induced by gluing the identity automorphism on $\pi_1(M_1)$ and an automorphism on $M_2$ that ``twists" the boundary of $\pi_1(M_2)$ around the loop $c_0$ as in the proof of Theorem~\ref{Mostow}. This is analogous to a Dehn twist in surface topology. For each loop $c$ in the center of $C$, we define a \emph{twist} around a loop $c$ to be an automorphism defined as above. By Lemma~\ref{twist}, twists induce infinite order elements of $\Out(\pi_1(M))$. 

Similarly, for each element in the center of a cusp subgroup of a piecewise rank $1$ manifold $M$, we define a corresponding \emph{twist} like above. The induced map of a twist on $\pi_1(M)$ has the same form as $\phi$, that is, up to conjugation, it is the identity on one vertex subgroup and conjugation by an element in the center of the edge subgroup on the other vertex subgroup. 

Let $\mathcal{T}(M)$ be the subgroup of $\Out(\pi_1(M))$ that is generated by twists. It is not hard to see that any two twists commute since either they are twists around loops in disjoint cusps or the loops they twist around commute since they are both in the center of the same cusp subgroup. Hence, $\mathcal{T}(M)$ is a torsion-free abelian group. Indeed, $\mathcal{T}(M)$ is isomorphic to the direct sum of the centers of the cusp subgroups of $M$. We observe that $\mathcal{T}(M)$ is a normal subgroup of $\Out(\pi_1(M))$. We can actually write $\Out(\pi_1(M))$ as a group extension of $\mathcal{T}(M)$ by a group of isometries of a manifold as follows:
\[1 \longrightarrow \mathcal{T}(M) \longrightarrow \Out(\pi_1(M)) \longrightarrow \mathcal{A}(M) \longrightarrow 1.\] 

An isometry of $\mathcal{M}$ permute the set of the pieces $\{\mathcal{M}_i\}$ in the cusp decomposition of $\mathcal{M}$. Thus, for the case where $M$ is of finite type and the cusp decomposition of $M$ has $k$ pieces, there is a homomorphism $\varphi$ from $\Isom(\mathcal{M})$ to the group of permutations of $k$ letters. Let $P$ be the image of $\varphi$. The kernel of $\varphi$ contains precisely those isometries of $\mathcal{M}$ that preserve each of the pieces in $\{\mathcal{M}_i\}$. Hence, $\Isom(\mathcal{M})$ has the structure of an extension of groups as follows: 
\[ 1 \longrightarrow \bigoplus_{i = 1}^k\Isom(\mathcal{M}_i) \longrightarrow \Isom(\mathcal{M}) \longrightarrow P \longrightarrow 1.\]
It follows that $\Isom(\mathcal{M})$ is finite since it has a finite index subgroup that is isomorphic to the direct sum of the isometry groups of the components of $\mathcal{M}$, which are finite. Thus, $\mathcal{A}(M)$ is finite if $M$ is of finite type. Now we prove Theorem~\ref{Out(pi_1(M))}.
\begin{proof}[Proof of Theorem \ref{Out(pi_1(M))}]
It suffices to prove that $\mathcal{T}(M) = \Ker\eta$. By definition, the action of a twist on the fundamental group of each of the pieces in the cusp decomposition of $M$ is the identity map up to conjugation by an element in the center of a cusp subgroup. By Mostow rigidity, the image of a twist under $\eta$ is the identity isometry.  Thus, $\mathcal{T}(M) \leq \Ker\eta$. 

Now if $\phi \in \Ker\eta$, then the isometry that $\phi$ induces on $\mathcal{M}$ is the identity map. Hence, for each $i$, the restriction of $\phi$ on $\pi_1(M_i)$ is the identity map in $\Out(\pi_1(M_i))$. Let $\phi_i$ be the restriction of $\phi$ to $\pi_1(M_i)$. If $M_i$ and $M_j$ are adjacent pieces that are glued together along $S$, then the restriction of $\phi_i$ to $\pi_1(S)$ is equal to that of $\phi_j$. This implies that $\phi_i$ and $\phi_j$ differ by a conjugation of an element in $\pi_1(S)$. Therefore, $\phi$ is a product of twists. So $\Ker\eta \leq \mathcal{T}(M)$.  
\end{proof}
Geometrically, Theorem~\ref{Out(pi_1(M))} says that an element $\Out(\pi_1(M))$ is a composition of twists and turns (on the level of fundamental groups). In the next section, we will prove that one can realize these by actual diffeomorphisms of $M$. The diffeomorphisms corresponding to twist will be very much like Dehn twists in surface topology in the sense that they are the identity on most of $M$, and the twisting happens on a cylindrical region of $M$ which is an isotopy of the cross section that moves a point around a nontrivial loop. 

\section{Smooth rigidity of piecewise rank $1$ manifolds}\label{smooth rigidity}
We will prove Theorem \ref{Mostow}, Theorem \ref{Nielsen} and then Theorem \ref{weak rigidity} in this section.
\begin{proof}[Proof of Theorem \ref{Mostow}]
Let $\phi \colon \pi_1(M) \longrightarrow \pi_1(M)$ be an isomorphism.  We are going to construct a diffeomorphism of $M$ by firstly defining diffeomorphisms on each piece in the cusp decomposition of $M$ and then gluing them together in such a way that the resulting diffeomorphism induces the isomorphism $\phi$. 

Let $M_i$, for $i$ in some index set $I$, be the pieces in the cusp decomposition of $M$. By Lemma~\ref{vertex subgroup}, the map $\phi$ defines a bijection $\alpha \colon I \longrightarrow I$ such that $\phi(\pi_1(M_i)) = \pi_1(M_{\alpha(i)})$ up to conjugation. By the Mostow-Prasad Rigidity Theorem for finite-volume, complete, locally symmetric, negatively curved manifolds (\cite{Prasad}), the restriction of $\phi$ on each vertex group of $\pi_1(M)$ up to conjugation is induced by an isometry $f_i$ from $M_i$ to $M_{\alpha(i)}$.

At this point one may want to glue the isometries $f_i$ together and claim that it is the desired diffeomorphism of $M$. However, there are two problems. One is that the gluing of $f_i$ at each pair of boundaries of $M_i$'s that are glued together might not be compatible with the gluing of $M_j$. The other problem is that if $M_i$ and $M_j$ are adjacent pieces, it may happen that ${f_i}_*$ and ${f_j}_*$ define different isomorphisms (by a conjugate) on the fundamental group of the boundary shared by $M_i$ and $M_j$. We might need to do some modifications to $f_i$'s near the boundary of $M_i$ before gluing them together.

We observe that the restriction of $f_i$ and $f_j$ to each pair of boundary components of $M_i$ and $M_j$ that are identified, say $S$, in the decomposition of $M$ induces the same map from $\pi_1(S)$ to $\phi(\pi_1(S))$ up to conjugation. Since $S$ is aspherical, it follows that $f_i$ and $f_j$ are homotopic. Then by Theorem~\ref{cusp isotopy}, we can modify the maps $f_i$ and $f_j$ by an isotopy of $S$ on a tubular neighborhood of the corresponding boundary component of $M_i$ and $M_j$ that is compatible with the gluing of $N$ and the action of $\phi$ on fundamental groups. Let $f \colon M \longrightarrow N$ be the diffeomorphisms obtained by gluing the modified $f_i$. Then $f_* = \phi$.
\end{proof}

\begin{theorem}\label{cusp isotopy}
Let $M$ be a piecewise rank $1$ manifold that has a cusp subgroup $C$. Let $S$ be a horo cross section of the cusp corresponding $C$ and let $P$ be the parabolic corresponding to $C$. If $\widetilde{f}\in P$ and $\widetilde{f} \colon \widetilde{S} \longrightarrow \widetilde{S}$ is $\pi_1(S)$-equivariant that is $\pi_1(S)$-equivariantly homotopic to the identity map of $\widetilde{S}$, then $\widetilde{f}$ is $\pi_1(S)$-equivariantly isotopic to the identity map. Thus, if $g \in Z(\pi_1(S))$, then there is an isotopy on $S$ that moves the base point around a loop that is homotopic to $g$. 
\end{theorem}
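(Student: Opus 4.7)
The plan is to exploit Property (e) of Section~\ref{general structure}, which places the center $Z(M_PN_P)$ inside the connected nilpotent group $N_P$, and then construct the desired isotopy as a one-parameter subgroup through $\widetilde{f}$ in this center. First I would note that since $\widetilde{f}$ acts on the horosphere $\widetilde{S}$, it must project to an element of $M_PN_P$ (the $A$-factor permutes concentric horospheres). The $\pi_1(S)$-equivariance of $\widetilde{f}$ means it normalizes $\Gamma_P=\pi_1(S)$, and the existence of a $\pi_1(S)$-equivariant homotopy from $\widetilde{f}$ to the identity forces the induced automorphism of $\Gamma_P$ to be inner because $S$ is aspherical. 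After composing $\widetilde{f}$ with an appropriate element of $\Gamma_P$, we may therefore assume $\widetilde{f}$ centralizes $\Gamma_P$.

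The main step is to upgrade this to $\widetilde{f}\in Z(M_PN_P)$. Because $\Gamma_{N_P}$ is a cocompact lattice in the unipotent group $N_P$, it is Zariski dense, so $\widetilde{f}$ centralizing $\Gamma_{N_P}$ already forces it to centralize all of $N_P$. For the $M_P$-part I would imitate the argument in the proof of Property (e): the projection of $\widetilde{f}$ to $M_P$ commutes with the image of $\Gamma_P$ in $M_P$, which is a lattice in $M_P$. Acting on $X_P=(K\cap M_P)\backslash M_P$, this projection must separately preserve the noncompact and Euclidean factors of $X_P$; the root-space analysis and one-parameter-subgroup argument used to prove Property (e) then forces the projection to act trivially on the noncompact factor and hence to lie in $K\cap M_P$, which together with centralization of $N_P$ places $\widetilde{f}$ in $Z(M_PN_P)$.

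Once $\widetilde{f}\in Z(M_PN_P)$, Property (e) gives $\widetilde{f}\in N_P$, and since $Z(M_PN_P)$ is an algebraic subgroup of the simply connected nilpotent group $N_P$ it is a connected abelian Lie subgroup, hence exponential. Writing $\widetilde{f}=\exp(X)$ for $X$ in the Lie algebra of $Z(M_PN_P)$, the family $\widetilde{f}_t=\exp(tX)$ stays inside this center for every $t$, so each $\widetilde{f}_t$ commutes with $\Gamma_P$ and the family descends to a $\pi_1(S)$-equivariant isotopy on $S$ from the identity to $\widetilde{f}$. For the final assertion about $g\in Z(\pi_1(S))$, take $\widetilde{f}=g$ acting on $\widetilde{S}$ by deck transformation: the path $t\mapsto \widetilde{f}_t(\widetilde{x}_0)$ joins $\widetilde{x}_0$ to $g\widetilde{x}_0$, so its projection to $S$ is a loop at $x_0$ representing $g\in\pi_1(S,x_0)$. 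I expect the middle step to be the main obstacle, since $M_P$ may have compact factors where Borel density does not directly apply, and one must track the Langlands decomposition data carefully, essentially re-running the proof of Property (e) with $\widetilde{f}$ in place of the hypothetical element of $Z^0\setminus N_P$.
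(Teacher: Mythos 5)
Your proof follows essentially the same approach as the paper's: both show that $\widetilde{f}$ centralizes $\Gamma_P$, invoke (the proof of) Property (e) to place $\widetilde{f}$ in the center of $N_P$, and then use the exponential/logarithmic coordinates on the simply connected nilpotent group $N_P$ to produce the straight-line one-parameter family $\widetilde{f}_t=\exp(tX)$, which descends to the desired $\pi_1(S)$-equivariant isotopy. You are more explicit about the reduction from ``induces an inner automorphism'' to ``centralizes $\Gamma_P$,'' about Zariski density of $\Gamma_{N_P}$ in $N_P$, and about the $M_P$-factor difficulty, all of which the paper compresses into the single citation ``by the proof of Property (e),'' but the underlying argument is the same.
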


\begin{proof}
We have, as before 
\[ 1 \longrightarrow F \longrightarrow C \longrightarrow B \longrightarrow 1,\]
where $F$ is the torsion free, discrete, cocompact subgroup of the isometry group of a simply connected Riemannian nilpotent Lie group $H$ (i.e. $H$ is endowed with a left-invariant Riemannian metric), which is the unipotent radical of the parabolic $P$ corresponding to $C$. 

If $f$ is an isometry of $S$ that is homotopic to the identity map, then if $\widetilde{f}$ is a lift of $f$ to the universal cover $\widetilde{S}$ of $S$, then $\widetilde{f}$ commutes with all the deck transformation of $\widetilde{S}$. By the proof of Property (e), the map $\widetilde{f}$ is in the center of the unipotent radical $N_P$ of $P$.

Since, $N_P$ is a nilpotent, simply connected Lie group, the exponential map is a global diffeomorphism from $\mathfrak{n}_P$ to $N_P$. In logarithmic coordinates of $N_P$, one can take the straight line in $N_P$ from the identity to $\widetilde{f}$. 
Since $\widetilde{f}_t$ is a straight line path in the Lie algebra of the nilpotent Lie group $N_P$, and $N_P$ is normal in $P$, it follows that conjugation by $\gamma \in C$ preserves this path. Since $\widetilde{f}_1 = \widetilde{f}$ commutes with $C$, it follows that the same is try for $\widetilde{f}_t$ for a given $t \in [0,1]$. That is, 
\[ \widetilde{f}_t \circ \gamma = \gamma \circ \widetilde{f}_t, \qquad \text{for all} \; \gamma \in C.\]
So multiplication by $\widetilde{f}_t$ in $M_PN_P$ is an $C$-equivariant isotopy. Hence, it descends to an isotopy on $S$.
\end{proof}

Now we prove Theorem \ref{Nielsen}.

\begin{proof}[Proof of Theorem \ref{Nielsen}]
We construct a homomorphism $\sigma \colon \Out(\pi_1(M)) \longrightarrow \Diff(M)$ such that $\pi\circ\sigma = \rho$. In this proof, instead of consider $M$, we consider a manifold $N$ that is diffeomorphic to $M$ that is obtained as follows. Take all the pieces $M_i$'s in the cusp decomposition of $M$. We can assume that the pieces $M_i$ are such that such that the number of isometry classes of the horo boundary components of $M_i$ is minimal. This is equivalent to saying that if $\varphi \in \Out(\pi_1(M))$, then for each piece $M_i$, the restriction of $\varphi$ to $\pi_1(M_i)$ to $\varphi(\pi_1(M_i))$ corresponds to an isometry of $M_i$ to $M_{\varphi(i)}$ (where $\varphi(i)$ denotes the index of the piece whose fundamental group is the image under $\varphi$ of a conjugate of $\pi_1(M_i)$). Now to each boundary component $\partial_jM_i$, we glue $S_{ij} = \partial_jM_i\times[0,1]$ along $\partial_jM_i \times 0$ by the identity map. Then $N$ is obtained by gluing corresponding $\partial_jM_i \times 1$'s together via the gluing maps given by the decomposition of $M$ into locally symmetric pieces. Clearly, $M$ and $N$ are diffeomorphic; and $N$ is obtained by gluing the pieces of $M$ together with tubes that are diffeomorphic to $\partial_jM_i \times [-1,1]$'s.

Now we define the homomorphism $\sigma$. Let $\varphi \in \Out(\pi_1(M))$. We define $\sigma(\varphi)$ to be the diffeomorphism $f$ of $N$ such that $f$ maps each $M_i$ isometrically to $M_{\varphi(i)}$. Any twists or turns will happen in the tube connecting the boundary components of the pieces as follows. The manifold $N$ is obtained by gluing the pieces of $M$ together with the tubes that are diffeomorphic to $\partial_jM_i \times [-1,1]$'s, each cross section of which has an affine structure inherited from that of the two boundary components $\partial_jM_i \times -1$ and $\partial_jM_i \times 1$ of the tube. This p-affine structure is induced by the p-affine structure of the boundary components of the pieces they get glued to. By the proof of Theorem \ref{cusp isotopy}, the restriction of $f$ to $\partial_jM_i \times (-1)$ and that to $\partial_jM_i \times 1$ differ by an element in the center of $H$, the universal cover of $\partial_jM_i$. We define $f$ to be the straight line homotopy (as in the proof of Theorem \ref{cusp isotopy}) on $\partial_jM_i \times [-1,1]$. 

We claim that $\sigma$ is a homomorphism. This is because for each $\varphi \in \Out(\pi_1(M))$, the map $\sigma(\varphi)$ is the unique diffeomorphism that induces $\varphi$ and that acts isometrically on each $M_i$ and the restriction of which is the straight line homotopy between the map on the two ends of each tube. The composition of such two maps is a map of the same type. By uniqueness, we see that $\pi\circ\sigma = \rho$. Hence, we have proved $\Out(\pi_1(M))$ lifts to $\Diff(M)$.

If $\Out(\pi_1(M))$ is finite, by the above, it is realized by a group of diffeomorphisms $F$ of $M$. Pick any Riemannian metric $g_0$ on $M$ and average this metric by $F$ to get a metric $g$. Then $F$ is a group is a group of isometries of $M$. 
\end{proof}

Now we prove Theorem \ref{weak rigidity}.

\begin{proof}[Proof of Theorem \ref{weak rigidity}]
Suppose that $\Out(\pi_1(M))$ is infinite. Let $g$ be a Riemannian metric on $M$. Since $M$ is closed, $\Isom(M)$ is compact. We show that $\Out(\pi_1(M))$ cannot be realized as a group of isometries by showing that $\Isom(M)$ has to be discrete and thus finite.

To show that $\Isom(M)$ is discrete, we show that its identity component $\Isom^0(M)$ contains only the identity map. Since $\Isom^0(M)$ is a connected, compact Lie group, it contains an $S^1$ if it consists of more than one point, in which case there is a finite order isometry that is homotopic to the identity. However, this cannot happen because of the follow theorem.

\begin{theorem}[Borel, Conner, Raymond \cite{CR}]\label{Borel}
Let $M$ be a closed connected aspherical manifold with centerless fundamental group. If $G$ is a finite group that act effectively on $M$, then the canonical homomorphism $\psi \colon G \longrightarrow \Out(\pi_1(M))$ is a monomorphism.
\end{theorem}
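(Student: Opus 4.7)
The plan is to follow Borel's classical argument, proceeding by contradiction. Suppose $\psi$ has nontrivial kernel; by passing to a cyclic subgroup, I may assume there is $g \in G$ of prime order $p$ with $g_* = \Conj(c)$ for some $c \in \pi_1(M)$. Since $M$ is aspherical, $g$ is homotopic to $\id_M$; the goal is to upgrade this to $g = \id_M$, using that $G$ acts effectively.

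The first step produces a canonical finite-order lift of $g$ to $\widetilde{M}$. I pick any lift $\tilde{g}\colon \widetilde{M} \to \widetilde{M}$; since it satisfies $\tilde{g}\alpha\tilde{g}^{-1} = c\alpha c^{-1}$ for $\alpha \in \pi_1(M)$, the modified lift $\hat{g} := c^{-1}\tilde{g}$ centralizes the deck action. Two consequences of the hypothesis $Z(\pi_1(M)) = 1$ enter here. First, $\hat{g}$ is the \emph{unique} lift of $g$ centralizing $\pi_1(M)$: any two such lifts differ by a deck transformation, which must then lie in $Z(\pi_1(M)) = 1$. Second, $\hat{g}$ has order exactly $p$: the identity $g_*(c) = c$ forces $c$ to commute with $\tilde{g}$, so $\hat{g}^p = c^{-p}\tilde{g}^p$ is both a deck transformation and a centralizer of $\pi_1(M)$, hence trivial. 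By uniqueness, the assignment $g \mapsto \hat{g}$ is a group homomorphism on $\ker\psi$; in particular, $\Gamma := \langle \pi_1(M), \hat{g}\rangle$ is abstractly isomorphic to the direct product $\pi_1(M) \times \Z/p$.

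The group $\Gamma$ acts properly discontinuously on $\widetilde{M}$ with cocompact quotient $M/\langle g\rangle$. If $g$ acted freely on $M$, then $\Gamma$ would act freely on $\widetilde{M}$, so $M/\langle g\rangle$ would be a closed aspherical manifold whose fundamental group $\Gamma$ contains the torsion element $\hat{g}$; this contradicts the torsion-freeness of the fundamental group of a closed aspherical manifold (which follows from the $K(\pi,1)$ property together with Poincar\'e duality applied to $\pi_1(M)\times\Z/p$).

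The technical obstacle is the general case, where the fixed set of $g$ on $M$ may be nonempty. Here I would invoke Smith theory applied to the prime-order homeomorphism $\hat{g}$ of the contractible manifold $\widetilde{M}$: Smith's theorem shows $\Fix(\hat{g})$ is nonempty and $\Z/p$-acyclic, and since $\hat{g}$ centralizes $\pi_1(M)$, this fixed set is $\pi_1(M)$-invariant with $\pi_1(M)$ acting freely on it. Combining this invariance with the Borel formula relating the mod-$p$ cohomology of the ambient space, the fixed set, and the quotient by the $\Z/p$-action, together with finite-dimensionality of $M$, yields the required contradiction. This cohomological step is the technical heart of the argument as it appears in Borel's \emph{Seminar on Transformation Groups}, and it is precisely where closedness of $M$ as a manifold (rather than merely asphericity) is used in an essential way.
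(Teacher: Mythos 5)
The paper never proves this statement---it is quoted as a known theorem of Borel (in the form given by Conner--Raymond \cite{CR}) and used as a black box inside the proof of Theorem \ref{weak rigidity}---so there is no in-paper argument to compare against; you are reconstructing the proof from the literature. Your algebraic half is complete and correct: the normalized lift $\hat{g}=c^{-1}\tilde{g}$, the use of $Z(\pi_1(M))=1$ both for uniqueness of the centralizing lift and to force $\hat{g}^p=\id$, and the resulting properly discontinuous action of $\pi_1(M)\times\Z/p$ on $\widetilde{M}$ are exactly the standard setup. (Your separate treatment of the free case is correct but redundant: once Smith theory gives $\Fix(\hat{g})\neq\emptyset$, the element $g$ already has a fixed point in $M$, so the free case never occurs.)

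The one place you stop short is the endgame: you assert that Smith theory, ``the Borel formula,'' and finite-dimensionality ``yield the required contradiction'' without saying what the contradiction is. To close the argument you need three specific facts. (i) $F=\Fix(\hat{g})$ is mod-$p$ acyclic (Smith theory on the finite-dimensional contractible space $\widetilde{M}$), so the free, properly discontinuous $\pi_1(M)$-action on $F$ gives $\check{H}^{*}(F/\pi_1(M);\Z/p)\cong H^{*}(\pi_1(M);\Z/p)$ by Cartan--Leray. (ii) By Newman's theorem $F$ has empty interior in $\widetilde{M}$, hence covering dimension at most $n-1$, so the compact set $F/\pi_1(M)\subset M$ has vanishing \v{C}ech cohomology in degrees $\geq n$. (iii) $H^{n}(\pi_1(M);\Z/p)\neq 0$ because $M$ is a closed aspherical $n$-manifold. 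Note that (iii) as you would naturally state it fails for odd $p$ when $M$ is non-orientable (there $H^{n}(M;\Z/p)=0$), so one must either run the argument with local coefficients---using $H^{n}(\pi_1(M);\Z/p[\pi_1(M)])\cong H^{n}_{c}(\widetilde{M};\Z/p)\neq 0$---or pass to the orientation double cover; this wrinkle is handled in \cite{CR} but is invisible in your sketch. With these three points supplied, your outline becomes a complete proof.
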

We have established that $M$ is aspherical. So we are left to show that $\pi_1(M)$ is centerless.
Let $g$ be in the center of $\pi_1(M)$. Consider the action of $\pi_1(M)$ on the Bass-Serre tree $T$ of $\pi_1(M)$. Since $g$ commutes with every element of $\pi_1(M)$, it preserves their Minset in $T$. For each edge $e$ of $T$, there are elements of $\pi_1(M)$ whose Fixset is $e$. Since the action is without inversion, $g$ fixes $e$. It follows that $g$ fixes $T$ pointwise. Hence $g = 1$ and $\pi_1(M)$ is centerless.
\end{proof}

\bibliographystyle{amsplain}
\bibliography{bibliography}
\end{document}